\documentclass[a4paper,11pt,english]{amsart}
\usepackage{amsfonts}
\usepackage{amsmath,amsthm}
\usepackage{latexsym}
\usepackage{array}
\usepackage{amssymb}
\usepackage{media9}
\usepackage{graphicx}
\usepackage{enumerate}
\usepackage[english]{babel}

\usepackage{color}
\definecolor{marin}{rgb}   {0.,   0.3,   0.7}
\definecolor{rouge}{rgb}   {0.8,   0.,   0.}
\definecolor{sepia}{rgb}   {0.8,   0.5,   0.}
\usepackage[colorlinks,citecolor=marin,linkcolor=rouge,
            bookmarksopen,
            bookmarksnumbered
           ]{hyperref}

\newcommand{ \red   }{\color{red}}

\newtheorem{lemma}{Lemma}[section]

\newtheorem{theorem}[lemma]{Theorem}
\newtheorem{proposition}[lemma]{Proposition}

\newtheorem{remark}[lemma]{Remark}
\newtheorem{example}[lemma]{Example}

\newtheorem{notation}[lemma]{Notation}
\newtheorem{definition}[lemma]{Definition}
\newtheorem{conclusion}[lemma]{Conclusion}
\numberwithin{equation}{section}
\newcommand{\QED}{\mbox{}\hfill \raisebox{-0.2pt}{\rule{5.6pt}{6pt}\rule{0pt}{0pt}}
          \medskip\par}
\newenvironment{Proof}{\noindent
    \parindent=0pt\abovedisplayskip = 0.5\abovedisplayskip
    \belowdisplayskip=\abovedisplayskip{\bfseries Proof. }}{\QED}

\newcommand{\e}{\ensuremath{\mathrm{e}}}


\newcommand{\Bc}{\mathcal{B}}

\newcommand{\dd}{\mathrm{d}}

\newcommand{\Gc}{\mathcal{G}}
\newcommand{\Hc}{\mathcal{H}}

\newcommand{\N}{\mathbb{N}}

\newcommand{\R}{\mathbb{R}}
\newcommand{\Fc}{\mathcal{F}}

\newcommand{\Lc}{\mathcal{L}}
\newcommand{\T}{\mathbb{T}}
\newcommand{\Z}{\mathbb{Z}}
\newcommand{\Tc}{\mathcal{T}}

\newcommand{\Uc}{\mathcal{U}}
\newcommand{\Vc}{\mathcal{V}}

\newcommand{\Norm}[2]{\|#1\|\left.\vphantom{T_{j_0}^0}\!\!\right._{#2}}


 \author[F. Casas]{Fernando Casas}
\address{IMAC and Departament de Matem\`atiques, Universitat Jaume I, 12071-Castell\'on, Spain.}
\email{Fernando.Casas@uji.es}

\author[N. Crouseilles]{Nicolas Crouseilles}
\address{INRIA-Rennes Bretagne Atlantique, IPSO Project and IRMAR (Universit\'e de Rennes I)}
\email{nicolas.crouseilles@inria.fr}

\author[E. Faou]{Erwan Faou}
\address{INRIA-Rennes Bretagne Atlantique, IPSO Project and IRMAR (Universit\'e de Rennes I)}
\email{Erwan.Faou@inria.fr}

\author[M. Mehrenberger]{Michel~Mehrenberger}
\address{IRMA (Universit\'e de Strasbourg and CNRS) and INRIA-Nancy-Grand Est, TONUS Project}
\email{mehrenbe@math.unistra.fr}

\title[High-order Hamiltonian splitting for Vlasov--Poisson equations]
{High-order Hamiltonian splitting for Vlasov--Poisson equations}

\begin{document}

\begin{abstract}
We consider the Vlasov--Poisson equation in a Hamiltonian framework and derive new time splitting methods based on the decomposition of the Hamiltonian functional between the kinetic and electric energy. Assuming smoothness of the solutions, we study the order conditions of such methods.
It appears that these conditions are of  Runge--Kutta--Nystr\"om type. In the one dimensional case, the order conditions can be further simplified, and efficient methods of order 6 with a reduced number of stages can be constructed. In the general case, high-order methods can also be constructed using explicit computations of commutators.
Numerical results are performed and show the benefit of using high-order splitting schemes in that context. Complete and self-contained proofs of convergence results and rigorous error estimates are also given.
\end{abstract}

\subjclass{  }
\keywords{Vlasov--Poisson equations, Runge--Kutta--Nystr\"om methods, Hamiltonian splitting, high-order splitting}
\thanks{
}

\maketitle

\section{Introduction}
Frequently, the Vlasov equation is solved numerically with particles methods. Even if
they can reproduce realistic physical phenomena, they are well known to be noisy and
slowly convergent when more particles are considered in the simulation. To remedy
this, the so-called Eulerian methods (which use a grid of the phase space) have known
an important expansion these last decades. Indeed, due to the increase of the machines
performance, the simulation of charged particles by using Vlasov equation can be performed
in realistic configurations. However, these simulations are still computationally
very expensive in high dimensions and a lot has to be done at a more theoretical level
to make simulations faster. For example, the use of high-order methods is classical
when one speaks about space or velocity discretization. However, for the simulation of
Vlasov--Poisson systems, the use of high-order methods in time is not well developed;
generally, only the classical Strang splitting is used and analyzed; see however pioneering
works of \cite{WatanSug2004, Shoucri2005} following \cite{Yoshida1990} or the recent work of
\cite{Schaeffer2010} in the linear case.
We mention also the work \cite{ffm}, which tells us that the increase of order of discretization
in space should be followed with an increase of order in time.

On the other side, a literature exists around the construction of high-order methods
for ODE (see \cite{BlaMo2000, blanes2008, hairer, Sofroniou}).
The main goal of this work is to construct high-order
splitting schemes for the nonlinear Vlasov--Poisson PDE system by the light of these
recent references.

The starting point of our analysis relies on the fact that the Vlasov--Poisson equation is a Hamiltonian PDE
for a Lie--Poisson bracket common to several nonlinear transport equations appearing in fluid dynamics, see for instance \cite{Marsden99} and Section \ref{crochets} below.
Up to phase space discretization, the splitting schemes we construct preserve this structure  and hence are {\em geometric integrators} in the sense of \cite{hairer,leimreich}. Moreover, each block is explicit in time, and can be used to derive high-order methods, taking into account some specific commutator relations.

We consider the following equation Vlasov--Poisson equation
\begin{equation}
\label{vlsp}
\partial_t f + v \cdot \partial_x f - \partial_x \phi(f) \cdot \partial_v f = 0,
\end{equation}
where $f(t,x,v)$ depends on time $t\geq 0$ and the phase space variables $(x,v) \in \T^d \times \R^d$, $d = 1,2,3$,
and where for vectors $(x_1,\ldots, x_d) \in \T^d$ and $(y_1,\ldots, y_d) \in \R^d$,
we set $x\cdot y = x_1 y_1 + \cdots +x_d y_d$ and $|y|^2 := y \cdot y$.
Here, $\T^d$ denotes the $d$ dimensional torus $\R^d \slash (2\pi \Z^d)$
which means that the domain considered is periodic in
space. Note that formally, the same analysis is valid on more general domains, however, we will perform the analysis, in particular the convergence analysis in this simplified framework.  Classically, the variable $x$ corresponds to the
spatial variable whereas $v$ is the velocity variable.

The electric potential $\phi(f)$ solves the Poisson equation
\begin{equation}
\label{eq:elec}
\phi(f)(x)  = - \Delta_{x}^{-1} \left[\int_{\R^d} f(x,v) \dd v - \frac{1}{(2\pi)^d}\int_{\T^d \times \R^d} f(x,v) \dd x \dd v\right],
\end{equation}
where $\Delta_x = \sum_{i = 1}^d \partial_{x_i}^2$ is the Laplace operator in the $x$ variable acting of functions with zero average. The electric field depending only on $x$ is defined as $E(f)  = -\partial_x \phi(f)$.
The energy associated with equations \eqref{vlsp}-\eqref{eq:elec} is
\begin{eqnarray}
\label{scarlatti}
\nonumber \Hc(f) &=& \int_{\T^d \times \R^d} \frac{|v|^2}{2} f(x,v)\dd x \dd v +\int_{\T^d} \frac{1}{2} |E(f)(x)|^2 \dd x.  \\
&=& \Tc(f) + \Uc(f).
\end{eqnarray}
The time discretization methods proposed in this paper are based on this decomposition of the energy.
 Indeed, the solution of the equations associated with $\Tc$ and $\Uc$ can be solved exactly (up to a phase space discretization, for example by interpolation in the framework of semi-Lagrangian methods).
We denote by $\varphi_{\Tc}^t(f)$ and $\varphi_{\Uc}^t(f)$ the flows associated with $\Tc$ and $\Uc$ respectively (we postpone to Section \ref{sec:conv} the precise definition of Hamiltonian flows). The first one corresponds to the equation
$$
\partial_t f+ v \cdot \partial_x f = 0,
$$
for which the solution is written explicitly as
$$
f(t,x,v) = f(0, x - tv,v).
$$
For the flow $\varphi_{\Uc}^t$, we have to solve the equation
\begin{equation}
\label{eq:U}
\partial_t f - \partial_x \phi(f) \cdot \partial_v f = 0,
\end{equation}
for which we verify that the solution is given by
$$
f(t,x,v) = f(0, x, v - tE(f(0))),
$$
where $E(f(0))$ is the value of the electric field 
at time $t = 0$. Indeed, $\phi(f)$ is constant along the solution of \eqref{eq:U}.
Based on these explicit formulae, we will first consider numerical integrators of the form
\begin{equation}   \label{dhom.1}
   \psi_p^{\tau} = \varphi_{\Uc}^{b_{s+1} \tau} \circ \varphi_{\Tc}^{a_s \tau} \circ \varphi_{\Uc}^{b_s \tau} \circ \cdots \circ \varphi_{\Uc}^{b_2 \tau} \circ
       \varphi_{\Tc}^{a_1 \tau} \circ \varphi_{\Uc}^{b_1 \tau},
\end{equation}
where $(a_i)_{i = 1}^s$ and $(b_i)_{i = 1}^{s+1}$ are real coefficients, and
such that the  numerical solution after time $t=\tau$ coincides with the exact solution up to terms of order $\tau^p$, i.e., for a given {\em smooth} Êfunction $f$,
\begin{equation}  \label{dhom.1b}
   \psi_p^{\tau}(f) = \varphi_{\Hc}^{\tau}(f) + \mathcal{O}(\tau^{p+1}),
\end{equation}
where $\varphi_{\mathcal{H}}^{\tau}(f)$ corresponds to the exact flow associated with (\ref{scarlatti}).
We will give a precise definition of smoothness in Section \ref{sec:conv}, and show that this condition ensures the convergence of order $p$ of the numerical method.

As composition of exact flows of Hamiltonians $\Tc$ and $\Uc$, such schemes are (infinite dimensional) Poisson integrators in the sense of \cite[Chapter VII]{hairer}. In particular they preserve the Casimir invariants for the structure for all times (e.g.\ all the $L^p$ norms of $f$). Note that in this work, we do not address the delicate question of
phase space approximation and focus only
on time discretization effects (see \cite{BeMe2008, ChaDeMe2011, ReSo2011}).

To analyze the order of the schemes \eqref{dhom.1}, we will use the Hamiltonian structure of the flows. We will show that they can be expanded in suitable function spaces in terms of commutators, formally reducing the problem to classical settings based on the Baker--Campbell--Haussdorff (BCH) formula and the Lie calculus,
see for instance \cite{hairer,blanes2008}. A rigorous justification will be given in Section \ref{sec:conv}.

In the Vlasov--Poisson case, we will see that the functionals $\Tc$ and $\Uc$ in the decomposition \eqref{scarlatti} satisfy the following formal  relation
\begin{equation}
\label{eq:rknl}
[[[ \Tc, \Uc ], \Uc ], \Uc ] = 0,
\end{equation}
where $[ \cdot, \cdot ]$ is the Poisson
bracket associated with the infinite dimensional Poisson structure (see Section \ref{crochets}).
This property reduces the number of order conditions on the coefficients $(a_i)_{i = 1}^s$ and $(b_i)_{i = 1}^{s+1}$ in formula \eqref{dhom.1}.
The situation is analogous to the case of splitting methods of Runge--Kutta--Nystr\"om (RKN) type for ordinary differential equations (ODE) derived from a
Hamiltonian function, see \cite{hairer,blanes2008}.
In dimension $d = 1$,  the Vlasov--Poisson system
even satisfies the stronger property
$$
[ [\Tc, \Uc ], \Uc] = 2 m \, \Uc,
$$
where $m$ is the total mass of $f$ which is a Casimir invariant, preserved by the exact flow and the splitting methods (\ref{dhom.1}).
This means that we have naturally simpler algebraic order conditions than  those of RKN type for the specific Vlasov--Poisson system in dimension 1.
In any dimension, it also turns out that the exact flow of the Poisson bracket $[ [\Tc, \Uc ], \Uc] $ can be computed up to space discretization.
We will retain these ideas to derive new high-order splitting integrators involving also the flow of this nested Poisson bracket
with optimized coefficients and number of internal steps, in a similar way as in the ODE setting \cite{BlaMo2000,blanes2008}.
The paper is organized as follows:

\begin{itemize}
\item In Section \ref{crochets}, we discuss the Hamiltonian Lie--Poisson structure of the Vlasov--Poisson equation, and give the expressions of some
iterated Poisson brackets. They will form the cornerstone of our analysis.
\item In Section \ref{derivation}, we will first make the link between the standard Lie calculus and the Hamiltonian structure, and then derive high-order splitting methods based on the formula \eqref{dhom.1}. We will then consider generalizations of these methods using explicitly calculable flows of iterated brackets.
\item In Section \ref{sec:num} we give numerical illustrations of the performances of the methods: we mainly exhibit the order of the method, but also address the question of Casimir invariant preservation (e.g.\ the $L^p$ norms),
 regarding the influence of phase space discretizations.
\item Finally, Section \ref{sec:conv} is devoted to the mathematical analysis of the splitting method: we give convergence results in some function spaces. To this aim, we first give a local existence result of the Vlasov--Poisson equation with precise estimates (following in essence \cite{Degond86}), then prove some stability estimates. The results presented in this section can be compared with the one in \cite{Lukas} for the Strang splitting, where however only compactly supported solutions are considered.

\end{itemize}

\section{Hamiltonian structure}
\label{crochets}
\subsection{Poisson brackets}

We define the microcanonical bracket $\{f,g\}$ of two (sufficiently smooth) functions  by the formula
$$
\{f,g\} =\partial_x f \cdot \partial_v g - \partial_v f \cdot \partial_x g.
$$
With this notation, we can write the Vlasov--Poisson equation as
\begin{equation}
\label{arodaky}
\partial_t f - \{h(f), f\} = 0,
\end{equation}
where
$$
h(f)(x,v) = \frac{|v|^2}{2} + \phi(f)(x)
$$
is the microcanonical Hamiltonian associated with $f$. Recall that for a given functional $\Gc(f)$, its Fr\'echet derivative is the distribution $ \frac{\delta \Gc}{\delta f}(f) $ evaluated at the point $f$, being defined by the formula
$$
\Gc(f + \delta f) - \Gc(f) = \int_{\T^d\times\R^d} \frac{\delta \Gc}{\delta f}(f) (x) \, \delta f(x)  \, \dd x \, \dd v + \mathcal{O}(\delta f^2 )
$$
for all smooth variation $\delta f$. In general, a Fr\'echet derivative is an operator acting on $f$, hence a rigorous writing of the previous formula necessitates a loss of derivative in $f$.  We will discuss these issues in Section \ref{sec:conv}.

Considering the two functionals $\Tc(f)$ and $\Uc(f)$ defined in \eqref{scarlatti}, their Fr\'echet derivatives read explicitly
\begin{equation}
\label{bobdylan}
\frac{\delta \Tc}{\delta f}(f) = \frac{|v|^2}{2}
\quad\mbox{and}\quad
\frac{\delta \Uc}{\delta f}(f) = \phi(f)(x),
\end{equation}
 where $\phi(f)$ is given by \eqref{eq:elec}.
Due to the relation $\Hc = \Tc + \Uc$, the Vlasov--Poisson equation can be written as
\begin{equation}
\label{eq:poisson1}
\partial_t f - \{ \frac{\delta \Hc}{\delta f}(f) , f \} = 0,
\end{equation}
which is a Hamiltonian equation for the Poisson structure associated with the following Poisson bracket:
for two functionals $\Hc(f)$ and $\Gc(f)$, we set
\begin{equation}
\label{hgf}
[\Hc,\Gc] = \int_{\T^d \times \R^d} \frac{\delta \Hc}{\delta f}(f)\{ \frac{\delta \Gc}{\delta f}(f), f\} \dd x \dd v= -[\Gc,\Hc],
\end{equation}
where the Fr\'echet functionals are evaluated in $f$.
 Note that the skew-symmetry is obtained using the relation
$$
\{ fg,h\} = f\{g,h\} + g\{f,h\},
$$
for three functions of $(x,v)$ and the fact that the integral in $(x,v)$ of a Poisson bracket of two functions always vanishes. Moreover, this bracket satisfies the Jacobi identity
$$
[\Fc,[ \Gc,\Hc]]Ê+ [\Gc,[ \Hc,\Fc]] + [\Hc,[ \Fc,\Gc]] = 0.
$$
We refer to \cite{Marsden99} for discussions related to this structure. Note that to give a meaning to all the previous expressions, we usually have to assume smoothness for the function $f$ and deal with loss of derivatives, see for instance \eqref{prokofiev} of Section \ref{sec:conv}.

The Hamiltonian--Poisson structure defined above possesses Casimir invariants, meaning quantities preserved for every Hamiltonian system of the form \eqref{arodaky}, and not depending on the specific form of $\Hc$. This is essentially a consequence of the fact that the nonlinear transport equation \eqref{arodaky} involves divergence free vector fields.
Let $\psi:\R \to \R$ be a smooth function, and consider the functional
\begin{equation}
\label{eq:casimirs}
\Psi(f) := \int_{\T^d \times \R^d} \psi(f(x,v)) \dd x \, \dd v.
\end{equation}
Its Fr\'echet derivative is $\frac{\delta \Psi}{\delta f}(f) = \psi'(f)$ and using the definition \eqref{hgf}, we can observe that for {\em all} Hamiltonian functionals $\Hc$, we have
$$
[\Hc,\Psi] =  \int_{\T^d \times \R^d} \frac{\delta \Hc}{\delta f}\{ \psi'(f), f\} \dd x \, \dd v = 0,
$$
owing to the fact that $\{ \psi'(f), f\} = 0$ for all functions $\psi$ and $f$. Hence the functionals \eqref{eq:casimirs} are invariant under any dynamics of the form \eqref{eq:poisson1} (see \eqref{janeweaver} below). They are called Casimir invariants of the Poisson structure. Typical examples are given by the $L^p$ norms of the solution $f$.

\subsection{Relations between $\Tc$ and $\Uc$}

Let us remark that, using \eqref{eq:elec} we have
$$
\int_{\T^d} |E(f)(x)|^2 \dd x
= \int_{\T^d} \phi(f)(x) \int_{\R^d} f(x, v)\dd v\, \dd x,
$$
and hence the potential energy $\Uc$ can be written
$$
\Uc(f) ={\frac{1}{2}}\int_{(\T^d \times \mathbb{R}^d)^2} f(x,v) \, \kappa(x-y) \, f(y,w) \, \dd x \, \dd y \, \dd v \, \dd w,
$$
where
$$
\kappa(x) = \frac{1}{(2\pi)^d}\sum_{k \in \Z^d\slash\{0\}} |k|^{-2} e^{i k \cdot x}
$$
is the kernel of the inverse of the Laplace operator in the $x$ variable.

The aim of this subsection is to prove the following result:

\begin{proposition}
For any smooth function $f$, the functionals $\Tc(f)$ and $\Uc(f)$, satisfy the following relation:
\begin{equation}
\label{tuu}
[[\Tc, \Uc],  \Uc](f) = 2 m(f) \,\Uc(f) + \Vc(f),
\end{equation}
where
\begin{equation}
\label{mf}
m(f) = \frac{1}{(2\pi)^d }\int_{\T^d \times \R^d}  f(x,v) \dd x \, \dd v
\end{equation}
is a constant of motion of \eqref{vlsp},
and
$$
\Vc(f) = {\red -}  \int_{\T^d} \Delta_x  \phi(f)(x) | \partial_x \phi(f)(x)|^2 \dd x,
$$
where $\phi(f)$ is defined in \eqref{eq:elec}.
In dimension $d =1$, we have $\Vc(f) = 0$, and in any dimension $d \geq 1$, the
relation
\begin{equation}
\label{rkn}
[[[\Tc,\Uc],\Uc],\Uc](f) = 0
\end{equation}
holds for all functions $f$.
\end{proposition}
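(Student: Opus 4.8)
The plan is to compute the two nested Poisson brackets directly from the definition \eqref{hgf} and the Fr\'echet derivatives \eqref{bobdylan}, treating every manipulation at the formal level; the functional-analytic framework giving these expressions a meaning, with the attendant loss of derivatives, is set up in Section \ref{sec:conv}.

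First I would compute $\Wc:=[\Tc,\Uc]$. Since $\frac{\delta\Uc}{\delta f}(f)=\phi(f)$ does not depend on $v$, one has $\{\phi(f),f\}=\partial_x\phi(f)\cdot\partial_v f$, so by \eqref{hgf} and an integration by parts in $v$,
\begin{equation*}
\Wc(f)=\int_{\T^d\times\R^d}\frac{|v|^2}{2}\,\partial_x\phi(f)\cdot\partial_v f\,\dd x\,\dd v=-\int_{\T^d\times\R^d}\bigl(v\cdot\partial_x\phi(f)(x)\bigr)\,f(x,v)\,\dd x\,\dd v,
\end{equation*}
equivalently $\Wc(f)=-\int_{\T^d}\partial_x\phi(f)(x)\cdot j(f)(x)\,\dd x$, where $j(f)(x):=\int_{\R^d}vf(x,v)\,\dd v$ is the current density.

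The delicate step is to obtain the Fr\'echet derivative of $\Wc$, because $\phi(f)$ depends on $f$ through the Poisson equation \eqref{eq:elec}, i.e.\ through $\rho(f)(x):=\int_{\R^d}f(x,v)\,\dd v$. Varying $f\mapsto f+\delta f$, using $\delta\phi=-\Delta_x^{-1}\!\int_{\R^d}\delta f\,\dd v$, the self-adjointness of $\Delta_x^{-1}$ on zero-mean functions and an integration by parts in $x$, I expect
\begin{equation*}
\frac{\delta\Wc}{\delta f}(f)(x,v)=-\,v\cdot\partial_x\phi(f)(x)-\Delta_x^{-1}\bigl(\partial_x\cdot j(f)\bigr)(x).
\end{equation*}
Plugging this into \eqref{hgf} with $\Gc=\Uc$ and using again $\{\phi(f),f\}=\partial_x\phi(f)\cdot\partial_v f$, the contribution of the $v$-independent term $-\Delta_x^{-1}(\partial_x\cdot j(f))$ vanishes because $\int_{\R^d}\partial_v f\,\dd v=0$, while an integration by parts in $v$ based on $\int_{\R^d}v_i\,\partial_{v_j}f\,\dd v=-\delta_{ij}\,\rho(f)$ turns the first term into
\begin{equation*}
[[\Tc,\Uc],\Uc](f)=\int_{\T^d}\bigl|\partial_x\phi(f)(x)\bigr|^2\,\rho(f)(x)\,\dd x.
\end{equation*}
Substituting $\rho(f)=-\Delta_x\phi(f)+m(f)$ --- which is exactly \eqref{eq:elec}, $m(f)$ in \eqref{mf} being the spatial mean of $\rho(f)$ --- and recognising $\int_{\T^d}|\partial_x\phi(f)|^2\,\dd x=\int_{\T^d}|E(f)|^2\,\dd x=2\,\Uc(f)$ from \eqref{scarlatti} yields \eqref{tuu} with $\Vc$ as stated. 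That $m(f)$ is a constant of motion of \eqref{vlsp} is the Casimir property of \eqref{eq:casimirs} applied to $\psi(s)=s$.

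For $d=1$, writing $E=-\phi'$ one has $\Delta_x\phi\,|\partial_x\phi|^2=\phi''(\phi')^2=\frac13\bigl((\phi')^3\bigr)'$, whose integral over $\T$ vanishes by periodicity, so $\Vc(f)=0$. Finally, the key point for \eqref{rkn} is that $[[\Tc,\Uc],\Uc](f)=\int_{\T^d}|\partial_x\phi(f)|^2\rho(f)\,\dd x$ depends on $f$ only through $\rho(f)$, and so does $\Uc$; for any functional $\Gc$ of this type the Fr\'echet derivative $\frac{\delta\Gc}{\delta f}$ is $v$-independent, so that for any $\Hc$ also of this type
\begin{equation*}
[\Hc,\Gc]=\int_{\T^d\times\R^d}\frac{\delta\Hc}{\delta f}\,\partial_x\!\Bigl(\frac{\delta\Gc}{\delta f}\Bigr)\cdot\partial_v f\,\dd x\,\dd v=\int_{\T^d}\frac{\delta\Hc}{\delta f}\,\partial_x\!\Bigl(\frac{\delta\Gc}{\delta f}\Bigr)\cdot\Bigl(\int_{\R^d}\partial_v f\,\dd v\Bigr)\dd x=0 .
\end{equation*}
Taking $\Hc=[[\Tc,\Uc],\Uc]$ and $\Gc=\Uc$ gives \eqref{rkn} in every dimension. (Equivalently, one may expand $[[\Tc,\Uc],\Uc]=2m\,\Uc+\Vc$ and use the Leibniz rule $[\Fc\Gc,\Hc]=\Fc[\Gc,\Hc]+\Gc[\Fc,\Hc]$, the antisymmetry $[\Uc,\Uc]=0$, the Casimir identity $[m,\,\cdot\,]=0$ and $[\Vc,\Uc]=0$ --- the last again because $\Vc$ and $\Uc$ both factor through $\rho$.) The one genuinely error-prone point is the computation of $\frac{\delta\Wc}{\delta f}$: one must carry the implicit dependence of the potential on $f$ through the Poisson solve correctly and track the signs coming from the integrations by parts in $x$ and in $v$ and from $\Delta_x^{-1}$; everything else is routine.
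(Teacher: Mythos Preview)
Your proof is correct and follows essentially the same route as the paper's: compute $[\Tc,\Uc]$, take its Fr\'echet derivative (carrying the dependence of $\phi$ on $f$ through the Poisson solve), observe that the $v$-independent piece of this derivative drops out of the next bracket because $\int_{\R^d}\partial_v f\,\dd v=0$, arrive at $\int_{\T^d}|\partial_x\phi(f)|^2\rho(f)\,\dd x$, and finish via $\rho=m-\Delta_x\phi$ and the total-derivative trick in $d=1$. Your formulation in terms of the current $j(f)$ and the explicit ``functionals of $\rho$ have $v$-independent Fr\'echet derivatives, hence mutually vanishing bracket'' argument for \eqref{rkn} are slight cosmetic variations of what the paper does.
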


\begin{Proof}
Using \eqref{bobdylan}, we calculate the following
\begin{eqnarray*}
[\Tc,\Uc] &=& - \int_{\T^d\times \R^d} \frac{\delta \Uc}{\delta f}\{ \frac{\delta \Tc}{\delta f}, f\}\dd x \, \dd v \\
&=& - \int_{\T^d\times \R^d}  \phi(f)  \{ \frac{|v|^2}{2} , f\}\dd x \, \dd v\\
&=& \int_{\T^d \times \R^d}  \phi(f)(x) v \cdot \partial_x f(x,v) \dd x \, \dd v.
\end{eqnarray*}
Let us calculate the Fr\'echet derivative of this functional. To this aim, we evaluate this functional at $f + \delta f$,
where $\delta f$ stands for a small perturbation $f$ satisfying  $ \int_{\T^d \times \R^d}  \delta f = 0$.
First, we have
\begin{equation}
\label{derive_phi}
\phi(f + \delta f)(x) = \phi(f)(x) - \Delta_x^{-1} \int_{\R^d} \delta f(x,w) \dd w + \mathcal{O}(\delta f^2).
\end{equation}
Hence, we get
\begin{multline*}
[\Tc,\Uc](f + \delta f)= [\Tc,\Uc]  + \int_{\T^d \times \R^d}  \phi(f)(x) v \cdot \partial_x \delta f(x,v) \dd x \dd v \\
-  \int_{\T^d \times \R^d}  \Big( \Delta_{x}^{-1} \int_{\R^d} \delta f(x,w) \dd w \Big)  v\cdot  \partial_x f(x,v) \dd x \dd v + \mathcal{O}(\delta f^2).
\end{multline*}
We see that, using an integration by parts in $x$, the third term can be written as
$$
- \int_{\T^d \times \R^d} \Big( \int_{\R^d} \delta f(x,v) \dd v \Big) \Delta_x^{-1}( w \cdot \partial_x  f(x,w)) \dd x \dd w.
$$
We deduce that
\begin{eqnarray*}
\frac{\delta [\Tc,\Uc]}{\delta f} (f) &=&  -v \cdot \partial_x \phi (f)(x)  - \int_{\R^d} \Delta_x^{-1} ( w  \cdot \partial_x  f(x,w)) \dd w\\
&=:& v \cdot E(f)(x) + Z(f)(x).
\end{eqnarray*}
Using this relation, we calculate
\begin{eqnarray*}
[[\Tc,\Uc],\Uc]&=& \int_{\T^d \times \R^d} (Z(f)( x) + v \cdot E(f)(x)) \{ \phi(f)(x), f(x,v) \}Ê\dd x \dd v\\
&=& - \int_{\T^d \times \R^d} (Z(f)( x) + v \cdot E(f)(x)) ( E(f)(x) \cdot \partial_v f(x,v))  \dd x \dd v.
\end{eqnarray*}
Now we see that the term involving the function $Z(f)(x)$ vanishes, as the integral of $\partial_v f(x,v)$ in $v \in \R^d$ is equal to $0$.
We can thus write after an integration by parts
\begin{eqnarray*}
[[\Tc,\Uc],\Uc]
&=&  \int_{\T^d \times \R^d} f(x,v) |E(f)(x)|^2 \dd x \dd v.
\end{eqnarray*}
In other words, we get
\begin{equation}
\label{TUU}
[[\Tc,\Uc],\Uc]  = \int_{\T^d} \rho(f)(x) |ÊE(f)(x)|^2 \dd x,\quad \mbox{with}\quad \rho(f)(x) = \int_{\R^d} f(x,v) \dd v.
\end{equation}
But we have with \eqref{mf} and \eqref{eq:elec}
$$
\rho(f)(x)  = m(f) - \Delta_x  \phi(f)(x),
$$
and this yields \eqref{tuu}.
In dimension $d=1$, we can further write that
$$
\Delta_x  \phi(f)(x) | \partial_x \phi(f)(x)|^2 = \frac{1}{3}\partial_x ( \partial_x \phi(f)(x))^3
$$
and conclude that $\Vc$ is identically equal to $0$. In any dimension $d$, as the Fr\'echet derivatives of $\Uc$ and $\Vc$ depend only on $x$, we automatically obtain \eqref{rkn}.
\end{Proof}

\subsection{Flow of the Hamiltonian  $[[\Tc,\Uc],\Uc]$}
\label{ronaldo}

As mentioned above, the Fr\'echet derivative of the Hamiltonian $[[\Tc,\Uc], \Uc]$ only depends on $x$. Hence its exact flow can be calculated explicitly, making it possible to be included in the splitting methods blocks in any dimension. The situation is completely analogous to
Hamiltonian systems in classical mechanics when the kinetic energy is quadratic in momenta, see e.g. \cite{blanes2008,mclachlan,omelyan}.

 From the expression \eqref{TUU} of the Poisson bracket $[[\Tc,\Uc],\Uc]$,
we can calculate its Fr\'echet derivative.

\begin{proposition}
\label{propK}
For any smooth function $f$ and using the notations introduced above, we have
$$
\frac{\delta [[\Tc,\Uc],\Uc]}{\delta f} = K(x, f),
$$
where $K$ satisfies
\begin{equation}
\label{eqonK_lap}
-\Delta_x K = -2m\Delta_x \phi - 2\sum_{i, j = 1}^d \left( \partial_{x_i}\partial_{x_j}\phi  \right)^2+ 2(\Delta_x \phi)^2.
\end{equation}
Denoting by $E_{x_j}$, $j = 1,\cdots, d$ the components of the electric vector fields $E$, we get
in the case $d=2$
\begin{equation}
\label{jacE}
-\Delta_x K = -2m\Delta_x \phi - 4 \left( \partial_{x_1}E_{x_2} \right) \left( \partial_{x_2}E_{x_1} \right) + 4 \left( \partial_{x_1}E_{x_1}\right)\left(\partial_{x_2}E_{x_2}  \right),
\end{equation}
and in the case $d=1$, $\partial_x K = -2mE$.
\end{proposition}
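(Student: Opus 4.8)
The plan is to compute the Fr\'echet derivative directly from the expression \eqref{TUU},
\[
[[\Tc,\Uc],\Uc](f) = \int_{\T^d}\rho(f)(x)\,|E(f)(x)|^2\,\dd x,\qquad \rho(f)=\int_{\R^d}f\,\dd v,\quad E(f)=-\partial_x\phi(f),
\]
by perturbing $f\mapsto f+\delta f$ with $\int_{\T^d\times\R^d}\delta f=0$ and collecting the terms linear in $\delta f$. Writing $\delta\rho(x)=\int_{\R^d}\delta f(x,v)\,\dd v$, formula \eqref{derive_phi} gives $\delta\phi=-\Delta_x^{-1}\delta\rho$ and hence $\delta E=-\partial_x\delta\phi$. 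The goal is to bring the variation of $[[\Tc,\Uc],\Uc](f)$ into the form $\int_{\T^d}K(x,f)\,\delta\rho(x)\,\dd x=\int_{\T^d\times\R^d}K(x,f)\,\delta f\,\dd x\,\dd v$, which identifies $K(x,f)$ as the Fr\'echet derivative (defined, as throughout this section, up to an additive constant, since the admissible variations have zero average).

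First I would expand
\[
\delta\!\left(\int_{\T^d}\rho|E|^2\,\dd x\right)=\int_{\T^d}|E|^2\,\delta\rho\,\dd x+2\int_{\T^d}\rho\,E\cdot\delta E\,\dd x .
\]
The first term is already in the desired shape and contributes $|E|^2=|\partial_x\phi|^2$ to $K$. For the second, using $\delta E=-\partial_x\delta\phi$ and an integration by parts in $x$, I get $2\int_{\T^d}\big(\partial_x\cdot(\rho E)\big)\,\delta\phi\,\dd x$; since $\partial_x\cdot(\rho E)$ is an exact $x$-divergence it has zero average on $\T^d$, so the self-adjointness of $\Delta_x^{-1}$ on zero-average functions together with $\delta\phi=-\Delta_x^{-1}\delta\rho$ turns this into $-2\int_{\T^d}\Delta_x^{-1}\!\big(\partial_x\cdot(\rho E)\big)\,\delta\rho\,\dd x$. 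Hence
\[
K(x,f)=|\partial_x\phi(f)|^2-2\,\Delta_x^{-1}\!\big(\partial_x\cdot(\rho(f)\,E(f))\big).
\]

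Next I would apply $-\Delta_x$ to this identity and substitute $\rho=m-\Delta_x\phi$ (from \eqref{TUU}) and $E=-\partial_x\phi$, so that $\partial_x\cdot(\rho E)=-m\Delta_x\phi+\partial_x\cdot\big((\Delta_x\phi)\,\partial_x\phi\big)$, giving
\[
-\Delta_x K=-\Delta_x|\partial_x\phi|^2-2m\Delta_x\phi+2\,\partial_x\cdot\big((\Delta_x\phi)\,\partial_x\phi\big).
\]
The two Bochner-type identities $\Delta_x|\partial_x\phi|^2=2\sum_{i,j=1}^d(\partial_{x_i}\partial_{x_j}\phi)^2+2\,\partial_x\phi\cdot\partial_x(\Delta_x\phi)$ and $\partial_x\cdot\big((\Delta_x\phi)\partial_x\phi\big)=\partial_x(\Delta_x\phi)\cdot\partial_x\phi+(\Delta_x\phi)^2$ make the cross terms cancel and produce exactly \eqref{eqonK_lap}. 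The case $d=1$ is then immediate, since $\sum_{i,j}(\partial_{x_i}\partial_{x_j}\phi)^2=(\Delta_x\phi)^2$ leaves $-\Delta_x K=-2m\Delta_x\phi$, i.e.\ $\partial_x K=-2mE$ up to a constant; and the case $d=2$ follows from $(\Delta_x\phi)^2-\sum_{i,j}(\partial_{x_i}\partial_{x_j}\phi)^2=2\big(\partial_{x_1}^2\phi\,\partial_{x_2}^2\phi-(\partial_{x_1}\partial_{x_2}\phi)^2\big)$ rewritten through the components of $E=-\partial_x\phi$, which is \eqref{jacE}.

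The only real obstacle is bookkeeping rather than depth: one must check at each step that the argument of every $\Delta_x^{-1}$ has zero average on $\T^d$ — automatic here because those arguments are exact $x$-divergences — and one must accept that $K$ itself is determined only modulo constants, which is why the conclusion is phrased as a Poisson equation for $K$ (and, when $d=1$, as an equation for $\partial_x K$). The Bochner identities and the $d=2$ algebra are routine.
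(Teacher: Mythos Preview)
Your argument is correct and follows essentially the same route as the paper: you compute the variation of $\int_{\T^d}\rho\,|E|^2\,\dd x$ to obtain the explicit formula $K=|E|^2-2\Delta_x^{-1}\partial_x\!\cdot(\rho E)$, then apply $-\Delta_x$ and expand using $\rho=m-\Delta_x\phi$ to reach \eqref{eqonK_lap}, with the $d=1$ and $d=2$ cases as specializations. The only cosmetic difference is that for $d=1$ the paper reads off $\partial_xK=-2mE$ directly from the explicit formula for $K$ rather than passing through the Laplacian identity; your extra care about zero averages for $\Delta_x^{-1}$ is welcome but not a departure in method.
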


\begin{Proof}
Let us calculate  $[[\Tc,\Uc],\Uc](f+\delta f)$
\begin{align*}
[[\Tc,\Uc],\Uc]&(f+\delta f) = \int \rho(f+\delta f) |E(f+\delta f)|^2 \dd x \\
&= \int \rho(f+\delta f) |\partial_x \phi(f+\delta f)|^2 \dd x  \\
&=  \int \rho(f+\delta f) \left|\partial_x \phi(f) -\partial_x \Delta_x^{-1}\int \delta f \dd w  \right|^2 +\mathcal{O}(\delta f^2)\\
&=  \int \rho(f+\delta f)\left[|E(f)|^2 -2\partial_x\phi\cdot \partial_x \Delta_x^{-1}\int \delta f\dd w\right]\dd x +\mathcal{O}(\delta f^2) ,
\end{align*}
where we used \eqref{derive_phi}. Hence we have
\begin{align*}
[[\Tc,\Uc],\Uc]&(f+\delta f)  - [[\Tc, \Uc], \Uc](f)\\
&= \int |E|^2 \rho( \delta f) \dd x -2\int \rho(f) \left[\partial_x \phi\cdot \partial_x\Delta_x^{-1}\int \delta f\dd w\right]\dd x+\mathcal{O}(\delta f^2)\nonumber\\
&= \int |E|^2 \int \delta f \dd w \dd x - 2\int \partial_x\cdot(\rho(f) E) \Delta_x^{-1}\int \delta f\dd w\dd x+\mathcal{O}(\delta f^2)\nonumber\\
&=\int |E|^2 \int \delta f \dd w \dd x - 2\int \Delta_x^{-1}\left[\partial_x\cdot(\rho E)\right] \int \delta f\dd w \dd x+\mathcal{O}(\delta f^2).
\end{align*}
We deduce, that
\begin{equation}
\label{eqonK_general}
K(x,f) := \frac{\delta [[\Tc,\Uc],\Uc]}{\delta f} (f) =  |E|^2 - 2 \Delta_x^{-1} \mathrm{div} ( \rho E).
\end{equation}
Let us consider its Laplacian of $K$
\begin{eqnarray*}
-\Delta_x K &=&  -2\sum_{i,j = 1}^d\left( \partial_{x_i}E_{x_j} \right)^2-2\sum_{i,j = 1}^dE_{x_j}\partial_{x_i}^2E_{x_j}+2\mathrm{div} ( \rho E)\\
&=&  -2\sum_{i,j = 1}^d\left( \partial_{x_i}\partial_{x_j}\phi \right)^2-2\sum_{i,j = 1}^d\partial_{x_j}\phi \, \partial_{x_i}^2\partial_{x_j}\phi\\
& & -2\sum_{j = 1}^d\partial_{x_j}\rho \, \partial_{x_j}\phi-2\rho\sum_{i = 1}^d\partial_{x_i}^2\phi,
\end{eqnarray*}
where $E_{x_j}$, $j = 1,\cdots, d$, denote the components of the electric vector fields $E$.
Using that $-\-\Delta_x \phi = \rho-m$, with $m$ independent of $x_i$, we obtain
\begin{equation}
\label{gugliemi}
-\Delta_x K = -2m\Delta_x \phi - 2\sum_{i, j = 1}^d \left( \partial_{x_i}\partial_{x_j}\phi  \right)^2+ 2(\Delta_x \phi)^2.
\end{equation}
In dimension $d=2$, we get
\begin{eqnarray*}
-\Delta_x K &=&  -2m\Delta_x \phi - 4 \left( \partial_{x_1}\partial_{x_2}\phi  \right)^2 + 4 \left( \partial^2_{x_1}\phi\right)\left(\partial^2_{x_2}\phi  \right), \nonumber\\
&=& -2m\Delta_x \phi - 4 \left( \partial_{x_1}E_{x_2} \right) \left( \partial_{x_2}E_{x_1} \right) + 4 \left( \partial_{x_1}E_{x_1}\right)\left(\partial_{x_2}E_{x_2}  \right), \nonumber\\
\end{eqnarray*}
which can be solved, the right hand side being of zero average. Let us remark
that the two last term are nothing but the determinant of the Jacobian matrix of  the electric field $E$.

In dimension $d=1$, we have from \eqref{eqonK_general}
\begin{eqnarray*}
\partial_x K &=& 2E \cdot \partial_x E  - 2 \partial_x  ( \Delta_x^{-1} \mathrm{div} (\rho E))\\
&=&  (-2\partial_{xx} \phi - 2 \rho) E \\
&=& - 2 m E
\end{eqnarray*}
as expected.
\end{Proof}

With the previous notations and Proposition \ref{propK},
the equation associated with the Hamiltonian $[[\Tc,\Uc],\Uc]$ is given by
\begin{equation}
\label{dorian}
\partial_t f - \left\{ K, f \right\} = \partial_t f - \partial_x K\cdot \partial_v f = 0.
\end{equation}
Hence the flow associated with $[[\Tc,\Uc],\Uc]$ is explicit and given by
\begin{equation}
\label{dorian2}
\varphi_{[[\Tc,\Uc],\Uc]}^t(f)(x,v) = f(0, x,v + t \partial_x K(x,f(0))),
\end{equation}
because, $K$ depending only on $x$ and integrals of $f$ in $v$, it is constant in the evolution of the flux associated with $[[\Tc,\Uc],\Uc]$.

In dimension $2$ and $3$,  $K$ (and then $\partial_x K$) can be easily computed
in Fourier space by solving \eqref{eqonK_lap}: in particular, the computational cost of a term
of the form $\varphi^t_{\Uc + \gamma [[\Tc,\Uc],\Uc]}$ is essentially the same for $\gamma = 0$ (standard splitting)
as for $\gamma \neq 0$.


\section{Derivation of high-order methods}
\label{derivation}

The splitting methods \eqref{dhom.1} are composition of exact flows of Hamiltonian equations of the form \eqref{arodaky}.
To analyze their orders of approximation, we will use the algebraic structure of the Vlasov--Poisson equation.
For a Hamiltonian equation of the form \eqref{eq:poisson1}, let us define
$$
\mathrm{ad}_{\Hc}f = \{ \frac{\delta \Hc}{\delta f}(f) , f\}.
$$
This notation is justified by the fact that
the equation \eqref{vlsp} is equivalent to
\begin{equation}
\label{janeweaver}
\forall\, \Gc, \quad \frac{\dd}{\dd t}\Gc(f) = [\Hc,\Gc](f) = -\int_{\R^dÊ\times \T^d} \frac{\delta G}{\delta f} \mathrm{ad}_{\Hc}f \, \dd x \, \dd v,
\end{equation}
where $\Gc$ here are functionals acting on some function space. We will not discuss here the mathematical validity of such an equivalence, but taking for instance $\Gc(f)$ as a norm of a function space (see Section \ref{sec:conv}), we can prove that the solution $f(t)$ admits a formal expansion of the form
\begin{equation}
\label{eq:exp1}
f(t) = \sum_{k \geq 0}\frac{ t^k}{k!} \mathrm{ad}^k_{\Hc} f_0 = \exp( t \, \mathrm{ad}_{\Hc}) f_0.
\end{equation}
By using similar expansions for the flows we have
$$
\varphi_\Tc^t (f) =  \sum_{k \geq 0}\frac{ t^k}{k!} \mathrm{ad}^k_{\Tc} f  \quad \mbox{and}\quad  \varphi_\Uc^t(f) = \sum_{k \geq 0}\frac{ t^k}{k!} \mathrm{ad}^k_{\Uc} f.
$$
By using \eqref{janeweaver} and the Jacobi identity, we see that the following relation
$$
[\mathrm{ad}_{\Tc},  \mathrm{ad}_{\Uc}] := \mathrm{ad}_{\Tc} \circ \mathrm{ad}_{\Uc} - \mathrm{ad}_{\Uc} \circ \mathrm{ad}_{\Tc}   = \mathrm{ad}_{[\Tc,\Uc]}
$$
 holds true. We deduce that
the classical calculus of Lie derivatives also applies to our case.

Using this identification, (see also \cite{blanes2008}) we can write formally the exact flows
as
$$
\varphi_\Tc^{t} =: \e^{t T}, \,   \varphi_{\Uc}^{t} =:\e^{t U} ,\quad\mbox{and}\quad \varphi_\Hc^t = \e^{t(T+U)},
$$
with the operators $T$ and $U$ satisfying the relation $[[T,U],U] = 2m U$ in dimension $1$, where $m$ is a constant, or the
RKN-type relation $[[[T,U],U],U] = 0$. To derive splitting methods in dimension $d \geq 2$, we will also consider numerical schemes containing blocks based on the exact computation of the flow associated with the Hamiltonian $[[T,U],U]$. In this section we will concentrate on the derivation of high-order splitting methods of the form (\ref{dhom.1}) satisfying these formal relations.

Scheme (\ref{dhom.1}) is at least
 of order $1$
for the problem (\ref{vlsp}) if and only if the coefficients $a_i$, $b_i$ satisfy the consistency condition
\begin{equation}   \label{consist}
   \sum_{i=1}^{s} a_i = 1, \qquad \sum_{i=1}^{s+1} b_i = 1.
\end{equation}
We are mainly interested in symmetric compositions, that is, integrators such that $a_{s+1-i} = a_i$, $b_{s+2-i} = b_i$, so that
\begin{equation}   \label{dhom.2}
   \psi_p^{\tau} = \e^{b_1 \tau U} \, \e^{a_1 \tau T} \, \e^{b_2 \tau U} \, \cdots \, \e^{b_2 \tau U} \,    \e^{a_1 \tau T} \,    \e^{b_{1} \tau U}.
\end{equation}
In that case,
they are of even order. In particular, a symmetric method verifying (\ref{consist}) is at least of order $2$ \cite{hairer}.
Notice that the number of flows in the splitting method (\ref{dhom.1}) or (\ref{dhom.2}) is $\sigma \equiv 2s+1$, but
the last flow can be concatenated with the first one at the next step in the integration
process, so that the number of flows $\varphi_U^{\tau}$ and $\varphi_T^{\tau}$ per step is precisely $s$.

Restriction (\ref{dhom.1b}) imposes a set of constraints the coefficients $a_i$, $b_i$ in the composition (\ref{dhom.2}) have to satisfy. These are the so-called order conditions of the splitting method and a number of procedures can be applied to obtain them \cite{hairer}. One of them consists in applying recursively
the Baker--Campbell--Hausdorff (BCH) formula in the formal factorization (\ref{dhom.2}). When this done, we can
 express $\psi_p^{\tau}$ as the formal exponential of only one operator
\begin{equation}  \label{asexp}
   \psi_p^{\tau} = \e^{\tau(T+U  + R(\tau))},
\end{equation}
where
\begin{multline}  \label{erre}
  R(\tau)  =  \tau p_{21} [T,U] + \tau^2 ( p_{31} [[T,U],T] +     p_{32} [[T,U],U]) + \\
    \tau^3 ( p_{41} [[[T,U],T],T] + p_{42} [[[T,U],U],T] + p_{43} [[[T,U],U],U] ) + \mathcal{O}(\tau^4),
\end{multline}
and $p_{ij}$ are polynomials in the parameters $a_i$, $b_i$. Here we assume that the coefficients satisfy (\ref{consist}).

The integrator is of order $p$ if $R(\tau) = \mathcal{O}(\tau^{p})$, and thus
the order conditions are $p_{21} = p_{31} = p_{32} = \cdots = 0$ up to the order considered.  For a symmetric scheme one has $R(-\tau) = R(\tau)$,
so that $R(\tau)$ only involves even powers of $\tau$. In consequence, $p_{21} = p_{41} = p_{42} =  \cdots = p_{2n,k} = 0$ automatically in (\ref{erre}) and we have only to impose $p_{31}=p_{3,2}= \cdots = p_{2n+1,k} = 0$.
The total number of order conditions can be determined by computing the dimension of the subspaces spanned by the
 $k$-nested commutators involving $T$ and $U$ for $k=3, 5, \ldots$, see \cite{mclachlan}.

For the problem at hand  $ [[[T,U],U],U] = 0$
identically, and this introduces additional simplifications due to the  linear dependencies appearing at higher order terms in $R(\tau)$. The number of
order conditions is thus correspondingly reduced. In Table  \ref{tab.rkn.1} we have collected this number  for  symmetric methods of order $p=2,4,6,8$
(line $d>1$). Thus, a symmetric 6th-order scheme within
this class requires solving $8$ order conditions (the two consistency conditions (\ref{consist}) plus $2$ conditions at order $4$ plus $4$ conditions at order $6$), so that
the scheme (\ref{dhom.2}) requires at least $15$ exponentials. In fact, it is a common practice to consider more exponentials than
strictly necessary and use the free parameters introduced in that way to minimize error terms. In particular, in \cite{BlaMo2000} a $6$th-order splitting method
involving $23$ exponentials ($11$ stages) was designed which has been shown to be very efficient for a number of problems, including
Vlasov--Poisson systems \cite{jcp14aho}.

\begin{table}
\begin{center}
  \begin{tabular}{|c|cccc|} \hline
  Order $p$ &  2 & 4 &  6 &   8    \\ \hline\hline
  $d>1$   & 2  & 4  & 8 &  18     \\  \hline
  $d=1$  & 2 & 4 &  8 &  16     \\
 \hline
\end{tabular}
\end{center}
  \caption{Numbers of independent order conditions to achieve order $p$ required by
symmetric splitting methods when $ [[[T,U],U],U] = 0$ ($d>1$), and when  $[[T,U],U] = 2 m  U$, with $m$ constant ($d=1$).}
\label{tab.rkn.1}
\end{table}

We have shown in the subsection \ref{ronaldo} that, besides the flow corresponding to $\mathcal{U}$, the flow associated to $[[\Tc,\Uc],\Uc]$ can also be
explicitly computed in a similar way as $\varphi_{\Uc}^{\tau}$. Moreover, since $[[[\Tc,\Uc],\Uc],\Uc]=0$,
both flows commute so that we can consider a composition
(\ref{dhom.1}) with the flow $\varphi_{\Uc}^{b_i \tau}$ replaced by $\varphi^{\tau}_{b_i \Uc + c_i \tau^2 [[\Tc,\Uc],\Uc]}$. Equivalently, in the composition
(\ref{dhom.2}) we replace $\e^{b_i \tau U}$ by $\e^{\tau C_i}$, where $C_i \equiv b_i U + c_i \tau^2 [[T,U],U]$:
\begin{equation}  \label{modifpot}
   \psi_p^{\tau} = \e^{\tau C_1} \, \e^{ a_1 \tau T} \, \e^{\tau C_2} \, \cdots \, \e^{\tau C_2} \, \e^{ a_1 \tau T} \,  \e^{\tau C_1}.
\end{equation}
In that case the order conditions to achieve order 6 are explicitly
\begin{equation}   \label{orcon2}
\aligned
  & \sum_{i=1}^{s+1} b_i \, \Big( \sum_{j=1}^i a_j \Big)^2 = \frac{1}{3};  \qquad\qquad
  \sum_{i=1}^{s+1} a_i \, \Big( \sum_{j=i}^{s+1} b_j \Big)^2 - 2 \sum_{i=1}^{s+1} c_i = \frac{1}{3};  \\
 & \sum_{i=1}^{s+1} b_i \, \Big( \sum_{j=1}^i a_j \Big)^4 = \frac{1}{5};  \qquad\qquad
   \sum_{i=2}^{s+1} b_i \, \bigg( \sum_{j=1}^{i-1} b_j \, \Big( \sum_{k=j+1}^i a_k \Big)^3 \, \bigg) = \frac{6}{5!}; \\
 & \sum_{i=2}^{s+1} a_i \bigg( 2 \sum_{j=1}^{i-1} a_j \Big( \sum_{k=j}^{i-1} c_k \Big) + \sum_{j=1}^{i-2} a_j \sum_{k=j+1}^{i-1} a_k \Big(
   \sum_{\ell=j}^{k-1} b_{\ell} \Big) \Big( \sum_{m=k}^{i-1} b_m \Big) \bigg) = \frac{1}{5!}; \\
 & 2 \sum_{i=2}^{s+1} a_i \Big( b_i \sum_{j=1}^{i-1} c_j + c_i \sum_{j=1}^{i-1} b_j \Big) +
  \sum_{i=2}^s a_i \Bigg(  2 \Big( \sum_{j=i+1}^{s+1} b_j \Big)   \Big(  \sum_{k=1}^{i-1} c_k \Big)   \\
 &  \;\; + 2 \Big( \sum_{j=i+1}^{s+1} c_j \Big)   \Big(  \sum_{k=1}^{i-1} b_k \Big) +
  \Big( \sum_{j=1}^{i-1} b_j \Big) \sum_{k=i+1}^{s+1} a_k  \Big( \sum_{\ell = i}^{k-1} b_{\ell} \Big) \Big( \sum_{m=k}^{s+1} b_m \Big) \Bigg) = \frac{1}{5!},
\endaligned
\end{equation}
together with the consistency conditions (\ref{consist}). Here $a_{s+1} = 0$, $a_{s+1-i} = a_i$, $b_{s+2-i} = b_i$. The two equations in the first line of
(\ref{orcon2}), together with (\ref{consist}), lead to a method of order four.
With the inclusion of $C_i$ in the scheme,
the number of exponentials can be significantly reduced (one has more parameters available to satisfy the order conditions):
the minimum number of exponentials required by the symmetric
method (\ref{modifpot}) to achieve order $6$ is $\sigma = 9$ instead of $15$
for  scheme (\ref{dhom.2}).  There are  several other systems
where the evaluation of the flow associated with  $[[T,U],U]$ is not substantially more expensive in terms of computational cost than the
evaluation of $\e^{\tau U}$, and thus schemes of the form (\ref{modifpot})
have been widely analyzed and several efficient integrators can be found in the literature \cite{blanes2008,omelyan}.

We have considered compositions of the form (\ref{modifpot}) with $\sigma = 9, 11$, and $13$ exponentials. When
$\sigma=9$ there is only one real solution of equations (\ref{orcon2}).
More efficient schemes are obtained by using more exponentials: the corresponding free parameters can be used to optimize the scheme (for instance,
by annihilating higher order terms in $R(\tau)$, reducing the norm of the main error terms, etc.). In Table \ref{tab.rkn.3} we collect the coefficients
of the best methods we have found. The most efficient one (see Section \ref{sec:num}) corresponds to $\sigma = 13$. In this case the two free
parameters have been chosen to vanish the coefficient multiplying the commutator
$[T,[T,[T,[T,[T,[T,U]]]]]]$
 at order 7 and such that $b_1 = b_2$.
This procedure usually leads to very efficient schemes, as shown in \cite{mclachlan02,sisc06}. The scheme reads
\begin{equation}   \label{sche.1}
   \psi_6^{\tau} = \e^{\tau C_1} \, \e^{ a_1 \tau T} \, \e^{\tau C_2} \, \e^{ a_2 \tau T} \, \e^{\tau C_3} \, \e^{ a_3 \tau T} \, \e^{\tau C_4} \, \e^{ a_3 \tau T} \,
      \e^{\tau C_3} \, \e^{ a_2 \tau T} \, \e^{\tau C_2} \, \e^{ a_1 \tau T} \, \e^{\tau C_1}
\end{equation}
and its coefficients are collected in Table \ref{tab.rkn.2}. Notice that all $b_i$ coefficients are positive and only one $a_i$ is negative. All methods from
Table \ref{tab.rkn.3} will be tested and compared in Section \ref{sec:num}.

\begin{table}[ht]
\centering
\begin{tabular}{cl}
\noalign{\smallskip}\hline\noalign{\smallskip}
\begin{tabular}{c}
$\psi_6^{\tau}$ of the form \eqref{modifpot} \\ with $\sigma=9$, $ \;\; d>1$
\end{tabular}
&  \quad
 \begin{tabular}{rcl}
$a_1$ &=&  $1.079852426382430882456991$    \\
$a_2$ &=&  $-0.579852426382430882456991$ \\
$b_1$ &=&  $0.359950808794143627485664$  \\
$b_2$ &=&  $-0.1437147273026540434771131$  \\
$b_3$ &=&  $0.567527837017020831982899$  \\
$c_1$ &=& $0$ \\
$c_2$ &=& $-0.0139652542242388403673$ \\
$c_3$ &=& $-0.039247029382345626020$ \\
\end{tabular}  \\
\noalign{\smallskip}\hline\hline\noalign{\smallskip}
\begin{tabular}{c}
$\psi_6^{\tau}$ of the form \eqref{modifpot} \\ with $\sigma=11$, $\;\; d>1$
\end{tabular}
 & \quad
 \begin{tabular}{rcl}
$a_1$ &=&  $a_2$    \\
$a_2$ &=& $0.303629319055488881944104$ \\
$a_3$ &=&  $-0.2145172762219555277764167$ \\
$b_1$ &=& $0.086971698963920047813358$  \\
$b_2$ &=&    $0.560744966588102145251453$  \\
$b_3$ &=&    $-0.1477166655520221930648117$  \\
$c_1$ & = &  $-1.98364114652831655458915 \cdot 10^{-6}$ \\
$c_2$ & = &  $0.00553752115152236516667268$ \\
$c_3$ & = & $0.00284218110811634663914191$ \\
   \end{tabular} \\
\noalign{\smallskip}\hline\hline\noalign{\smallskip}
\begin{tabular}{c}
$\psi_6^{\tau}$ of the form \eqref{modifpot} \\ with $\sigma=13$, $\;\; d>1$
\end{tabular}
 & \quad
 \begin{tabular}{rcl}
$a_1$ &=&  $0.270101518812605621575254$    \\
$a_2$ &=& $-0.108612186368692920020654$ \\
$a_3$ &=&  $0.338510667556087298445400$ \\
$b_1$ &=& $b_2$  \\
$b_2$ &=&    $0.048233230175303256742758$  \\
$b_3$ &=&    $0.236139260374249444475399$  \\
$b_4$ & =&   $0.334788558550288084078170$ \\
$c_1$ & = &  $0.000256656790401210726353$ \\
$c_2$ & = &  $0.000943977158092759357851$ \\
$c_3$ & = & $-0.002494619878121813220455$ \\
$c_4$ &=&   $-0.002670269183371982607658$
\end{tabular}  \\
\noalign{\smallskip}\hline
\end{tabular}
\caption{\small{Coefficients for symmetric schemes of order 6 for the Vlasov--Poisson equation in the general case ($d > 1$) for $\sigma=9, 11$ and $13$.
}}
\label{tab.rkn.3}
\end{table}

In the one-dimensional case, $d=1$, we have in addition $[[T,U],U] = 2 m  U$, so that the operators $C_i$ in scheme (\ref{sche.1}) are simply
$C_i = (b_i + 2 c_i m \tau^2) U$. But in this case one can do even better since this feature leads to additional simplifications also
at higher orders in $\tau$. Specifically, a straightforward calculation shows that
\begin{eqnarray}  \label{modifpotb}
   W_{5,1} & = &  [U,[U,[T,[T,U]]]]   = 4 m^2 U  \nonumber  \\
    W_{7,1} & = & [U,[T,[U,[U,[T,[T,U]]]]]] = -8 m^3 U  \\
    W_{7,2} & = &  [U,[U,[U,[T,[T,[T,U]]]]]] = 0,  \nonumber
\end{eqnarray}
and the number of order conditions is further reduced, as shown in the third line of Table \ref{tab.rkn.1} ($d=1$). Although this reduction only manifests at orders
higher than six, we can incorporate the flows of $W_{5,1}$ and
$W_{7,1}$ into the composition, namely we can replace the $e^{b_i \tau U}$ in  (\ref{dhom.2}) by  $e^{\tau D_i}$, where
\begin{eqnarray*}
   D_i & = & b_i U + c_i \tau^2  [[T,U],U] + d_i \tau^4  W_{5,1} + e_i \tau^6  W_{7,1}  \\
    & = & (b_i + 2 c_i m \tau^2 + 4 d_i m^2 \tau^4 - 8 e_i m^3 \tau^6) U.
\end{eqnarray*}
In this way it is possible  to reduce the number of exponentials in the composition and obtain more efficient integrators tailored for this special
situation. In the particular case of a $6$th-order symmetric scheme it turns out that the $d_i$ and $e_i$ coefficients can be used to vanish
some of the conditions at order seven, and thus reduce the overall error. The composition
\begin{equation}   \label{sche.2}
   \psi_6^{\tau} = \e^{\tau D_1} \, \e^{ a_1 \tau T} \, \e^{\tau D_2} \, \e^{ a_2 \tau T} \, \e^{\tau D_3} \, \e^{ a_3 \tau T} \,
      \e^{\tau D_3} \, \e^{ a_2 \tau T} \, \e^{\tau D_2} \, \e^{ a_1 \tau T} \, \e^{\tau D_1}
\end{equation}
with
\begin{eqnarray*}
   D_1 & = & (b_1 + 2 c_1 m \tau^2) U \\
   D_2 & = & (b_2 + 2 c_2 m \tau^2 + 4 d_2 m^2 \tau^4) U \\
   D_3 & = & (b_3 + 2 c_3 m \tau^2 + 4 d_3 m^2 \tau^4 - 8 e_3 m^3 \tau^6) U
\end{eqnarray*}
and coefficients collected in Table \ref{tab.rkn.2} ($d=1$) turns out to be particularly efficient, as shown in \cite{bedros}. Here the parameters
$c_i$, $d_i$ and $e_i$ have been chosen to satisfy $4$ out of $8$ conditions at order $7$.

\begin{table}[ht]
\centering
\begin{tabular}{cl}
\noalign{\smallskip}\hline\noalign{\smallskip}
\begin{tabular}{c}
$\psi_6^{\tau}$ of the form \eqref{sche.2} \\ with $\sigma=11$, $ \;\; d=1$
\end{tabular}
&  \quad
 \begin{tabular}{rcl}
$a_1$ &=&  $0.168735950563437422448196$    \\
$a_2$ &=&  $0.377851589220928303880766$ \\
$a_3$ &=& $-0.093175079568731452657924$ \\
$b_1$ &=&  $0.049086460976116245491441$  \\
$b_2$ &=&  $0.264177609888976700200146$  \\
$b_3$ &=&  $0.186735929134907054308413$  \\
$c_1$ &=& $-0.000069728715055305084099$ \\
$c_2$ &=& $-0.000625704827430047189169$ \\
$c_3$ &=& $-0.002213085124045325561636$ \\
$d_2$ &=& $-2.916600457689847816445691 \cdot 10^{-6}$ \\
$d_3$ &=&  $3.048480261700038788680723 \cdot 10^{-5}$  \\
$e_3$ &=&  $4.985549387875068121593988 \cdot 10^{-7}$ \\
   \end{tabular} \\
\noalign{\smallskip}\hline
\end{tabular}
\caption{\small{Coefficients for a symmetric scheme of order 6 for the Vlasov--Poisson equation in dimension $d=1$.
}}
\label{tab.rkn.2}
\end{table}

The methods we have considered here are left-right symmetric compositions whose first flow corresponds to the functional $\Uc$. It is clear, however,
that similar compositions but now with the first flow corresponding to $\Tc$ can be considered. In that case, the schemes read
\begin{equation}   \label{modifpot2}
   \psi_p^{\tau} = \e^{ a_1 \tau T} \, \e^{\tau C_1} \,  \cdots \, \e^{\tau C_1} \, \e^{ a_1 \tau T}.
\end{equation}
This corresponds to a different class of methods, in general with a different behavior and efficiency, since this
problem possesses a very particular algebraic structure that is not preserved by  interchanging $T$ and $U$.
We have
also analyzed 6th-order schemes of this class, but we have not found better integrators than those collected in Tables \ref{tab.rkn.3}  and \ref{tab.rkn.2} in our
numerical experiments.

\section{Numerical examples}
\label{sec:num}
This section is devoted to numerical illustrations of the previous splitting methods
in the cases $d=1$ and $d=2$ (in \eqref{vlsp}).

The splitting methods introduced above enable to reduce the numerical resolution of
the Vlasov--Poisson problem \eqref{vlsp} to one-dimensional linear transport problems
of the form
\begin{equation}
\label{eq_a}
\partial_t f + a \partial_z f = 0, \qquad  f(t^n, z) = g(z), \qquad z\in \T^1,
\end{equation}
where $z$ can denote the spatial direction $x$ or the velocity direction $v$,  $a$ is a
coefficient which does not depend on the advected direction $z$, and $g$ denotes
an initial condition given on a uniform grid of $N$ points. Typically, $a$ is the component of the vector $v$ or of the electric field frozen at some grid point in the $x$-variable.

To deal with the one-dimensional advection equations,
a semi-Lagrangian method is used (see \cite{despres, ChaDeMe2011, CroMeSo2010}).
Since the characteristics can be solved exactly in this case ($a$ does not depend on $z$),
the error produced by the scheme comes from the splitting (error in time) and from
the interpolation step (error in $x$ and $v$). Note that the interpolation is performed
using high-order Lagrange polynomials (of order $17$ in practice) so that the numerical
solution of \eqref{eq_a} writes
$$
f(t^{n+1}, z_i) \approx \mathcal{I} g( z_i - a\tau),
$$
where  $\mathcal{I}$  is an interpolation operator (piecewise Lagrange interpolation in our case).
We refer the reader to  \cite{BeMe2008, despres, ChaDeMe2011, CroMeSo2010}) for more details.
After each advection in the velocity direction ($\Uc$ part),
the Poisson equation \eqref{eq:elec}  is solved to update the electric potential $\phi$.
Note that in the case $d=2$, the Hamiltonian splitting leads to $2$-dimensional advections
$\Uc$ and $\Tc$. These subproblems are split again leading to one-dimensional
advections; this does not introduce additional errors since it concerns linear advection
for which this subsplitting is exact. The numerical resolution of the Poisson equations \eqref{eq:elec}
and  \eqref{jacE} to get $\phi$ and $K$ is performed using a spectral method. Their derivatives are computed using high
order finite differences.

We consider the following initial condition for \eqref{vlsp} with $d=1$
\begin{equation}
\label{ci1}
f(t=0, x, v)=\frac{1}{\sqrt{2\pi}}\exp(-v^2/2) (1+0.5 \cos(k x)),
\end{equation}
with  $x\in [0, 2\pi/k], \;  v\in [-v_{\max}, v_{\max}]$,  $v_{\max}=8$ and $k=0.5$.
In the case $d=2$, the following initial condition for \eqref{vlsp} is chosen
\begin{equation}
\label{ci2}
 f(t=0, x, y, v_x, v_y) = \frac{1}{2\pi}\exp(-(v_x^2+v_y^2)/2)\left( 1+0.5 \cos(k x)\cos( k y)\right),
\end{equation}
where $x, y\in [0, 2\pi/k], v=(v_x, v_y)\in [-v_{\max}, v_{\max}]^2$, $v_{\max}=8$ and $k=0.5$.

We are interested in the total energy conservation
$\Hc(f)$ given by \eqref{scarlatti}. Indeed, this quantity is
theoretically preserved by \eqref{vlsp} for all times, so it represents an interesting diagnostic. For a given
time splitting, we introduce the discrete total energy $\Hc(f_h)(t)$ (integrals in phase space are replaced by summations)
where $f_h$ denotes the solution of the splitting scheme
and we look at the following quantity
\begin{equation}
\label{diag_energy}
\mbox{err}_\Hc = \max_{t\in [0, t_{\mathrm{f}}]} \left|\frac{\Hc(f_h)(t)}{\Hc(f)(0)}-1\right|,
\end{equation}
where $t_{\mathrm{f}}>0$ is the final time of the simulation. We are also interested in the $L^2$ norm $\|f_h(t)\|_{L^2}$
of $f_h$ (which is also preserved with time) and we plot the quantity
\begin{equation}
\label{diag_Lp}
\mbox{err}_{L^2} = \max_{t\in [0, t_{\mathrm{f}}]} \left|\frac{\|f_h(t)\|_{L^2}}{\|f(0)\|_{L^2}}-1\right|,
\end{equation}

Different splitting will be studied regarding these quantities to compare their relative performances.
First, we consider some splitting methods from the literature: the well-known $2$nd-order Strang splitting (STRANG,
$\sigma=3$ flows per step size, even if we take $\sigma=2$
in all the figures, since the last flow can be concatenated with the first flow at the next iteration), the so-called triple jump $4$th-order composition
\cite{Yoshida1990} (3JUMP, $\sigma=7$ flows) and the $6$-th order splitting method proposed in \cite{BlaMo2000} (06-23, $\sigma=23$ flows).
Then, the splitting methods introduced in this work are considered. When $d=1$, the method of Table \ref{tab.rkn.2} (06-11, $\sigma = 11$ flows), and in the
case $d>1$ the schemes of Table \ref{tab.rkn.3}: 06-9, 06-11 and 06-13, with $\sigma=9, 11$ and $13$ flows, respectively.

In the following figures, we choose a final time $t_{\mathrm{f}}$ and the quantities \eqref{diag_energy}
and \eqref{diag_Lp} are plotted
as a function of  $\sigma/\tau$, where $\sigma$ is the number of flows
of the considered method and $\tau$ is the
time step used for the simulation.
This choice ensures that all the diagnostics are obtained with
a similar CPU cost. In the sequel, we consider $70$ different time steps
in $[0.125, 8]$ for $d=1$ and $100$ different time steps belonging to the interval $[0.1, 30]$ for $d=2$.
Finally, we denote by $N$ the number of points per direction we use to sample
uniformly the phase space grid.

\noindent In Figure \ref{fig:nrj_vp1d}, we first focus on the $d=1$ case. We plot
the quantity relative to the total energy err$_{\Hc}$ defined in \eqref{diag_energy}
for STRANG, 3JUMP, 06-23 and our 06-11 (see Table \ref{tab.rkn.2}) using $N=256$ points
per direction and $t_{\mathrm{f}}=16$.
The expected orders of the different methods are recovered. However, even if 06-23 and 06-11 are
both of $6$th-order, 06-11 presents a better behavior since the total energy is better
preserved up to two orders of magnitude than 06-23, with a comparable time CPU.
Note that the 06-11 scheme has also been used with success
in the one-dimensional context in \cite{bedros}.
In Figures \ref{fig:nrj_vp1d_N}, the same diagnostics as before is shown, but with two smaller values of $N$.
For $N=64$, we can also observe the plateau for small $\tau$ which reveals the phase space error.
The level of this plateau can be decreased by increasing $N$.

On Figure \ref{fig:nrj_vp1d_time}, the time evolution of $\Hc(f_h)(t)$ and $\| f_h(t)\|_{L^2}$ are displayed
for the four splitting methods with different $N$ at a almost constant CPU time: STRANG with $\tau=1/8$, 3JUMP with $\tau=0.4$,
06-23 with $\tau=4/3$ and 06-11 with $\tau=0.64$. It appears that the conservation of the total energy is very well preserved
for 06-11. For the conservation of the $L^2$ norm, the benefit of high-order splitting is not so clear since
all the curves are nearly superimposed. When $N$ increases, we observe that the eruption time increases;
the eruption time corresponds to the time at which the finest scale length of $f$ reaches the phase space grid size.
After this time, the error rapidly blows up (see \cite{WatanSug2004}).
Finally, on Figure \ref{fdis}, we display the whole phase space distribution function at time $t_{\mathrm{f}}=16$
obtained with 06-11. We can observe the fine structures (filaments) which are typically developed in this nonlinear Landau test case.

\noindent Next, we focus on the $d=2$ case.
In Figure \ref{fig:nrj_vp2d_standard}
we plot the quantity relative to the total energy err$_{\Hc}$ defined in \eqref{diag_energy} as a function of
$\sigma/\tau$ for $N=64$ and $t_{\mathrm{f}}=60$.
First, on the left part of Figure \ref{fig:nrj_vp2d_standard}, methods of the literature are displayed: STRANG (order 2), 3JUMP (order $4$), and
06-23 (order $6$).
This diagnostic enables us to recover the expected order of the different methods.
Then, on the right part of Figure \ref{fig:nrj_vp2d_standard} we focus on our new methods 06-9, 06-11 and 06-13 (see Table  \ref{tab.rkn.3}).
All the methods in this figure are of order $6$, so that
it enables to study the influence of the number of flows $\sigma$ (the reference
06-23 method is also displayed) on the total energy conservation.
Even if all the methods are of order $6$,
they have not the same precision. Indeed, adding some flows in the splitting method enables in our context
to generate a more efficient method. Two explanations can be made: first, the coefficients can be
chosen smaller and few of them are negative and second, the error term can be optimized
(see Section \ref{derivation}).
Finally, the 06-13 method appears to be
the best method, reaching an error of about $10^{-10}$ with $\tau\approx 0.2$.

\noindent In Figure \ref{fig:l2norm_standard}
we plot the quantity err$_{L^2}$ defined in \eqref{diag_Lp} as a function of $\sigma/\tau$
for the different methods in the case $d=2$.
This diagnostic enables to quantify the dissipation (small details of the solution are eliminated) of the numerical methods (see \cite{CroMeSo2010}).
As mentioned in the case $d=1$, the benefit of high-order time integrator is not very clear.
We can also remark that the influence of the number of flows is not very significative
(see right part of Figure  \ref{fig:l2norm_standard}); indeed, when $\tau$ is small
we observe that all the methods converges towards a constant (which is $0.04$ for 3JUMP, $0.03$ for 06-23
and a close value of $0.03$ for the other methods). Here, 3JUMP and 06-9 are the worst methods; this may be linked
with the fact that they contain large negative coefficients and then they present important zigzag (see \cite{hairer}).

	\begin{figure}
		\includegraphics[width=12cm]{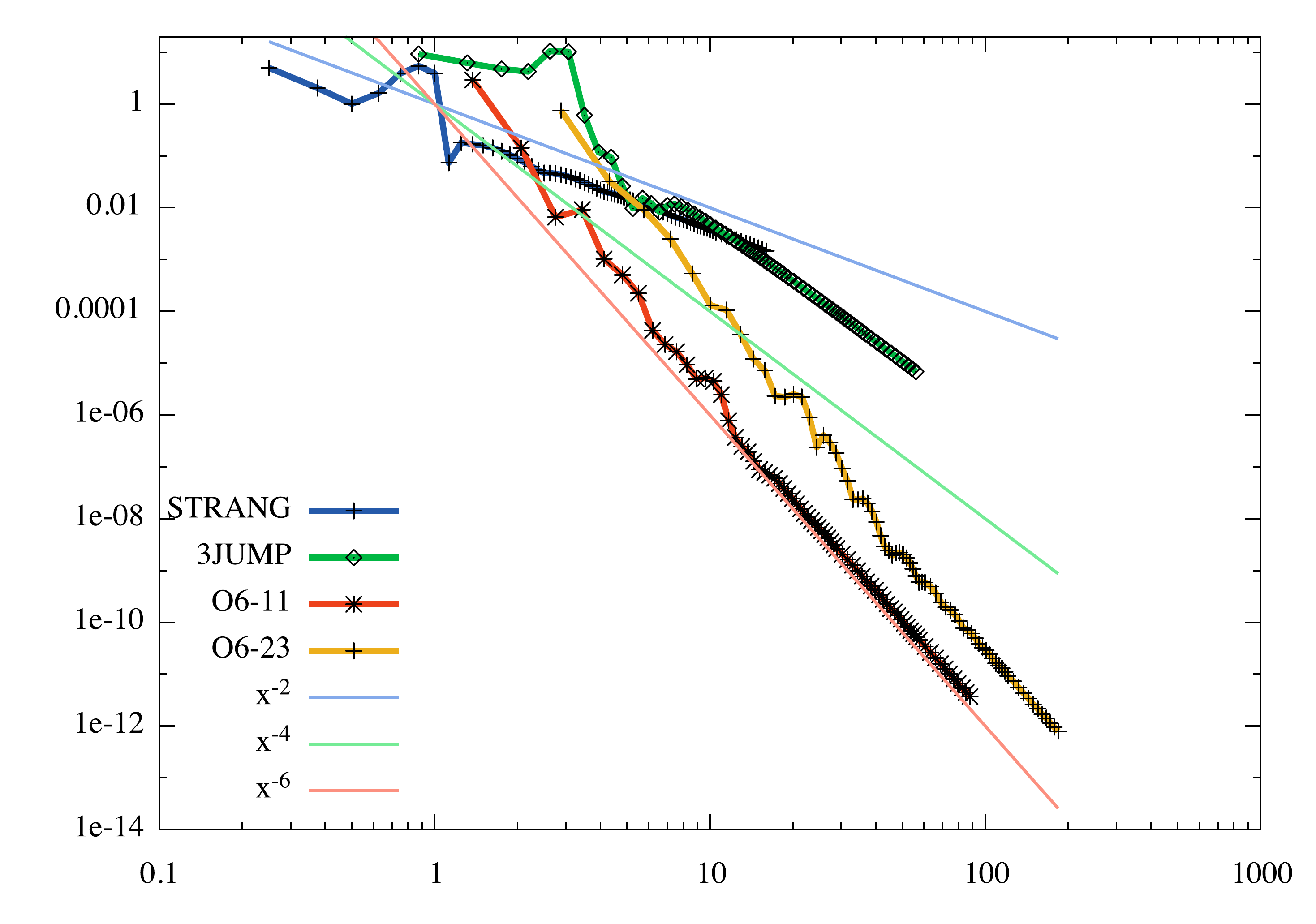}
		\caption{Case $d=1$. err$_\Hc$ (defined by \eqref{diag_energy}) as a function of $\sigma/\tau$ where $\sigma$ is the number of flows
		and $\tau$ the time step, for the different splitting methods. $N=256$. }
		\label{fig:nrj_vp1d}
	\end{figure}

\begin{figure}
	\begin{tabular}{cclll}
		\includegraphics[width=6cm]{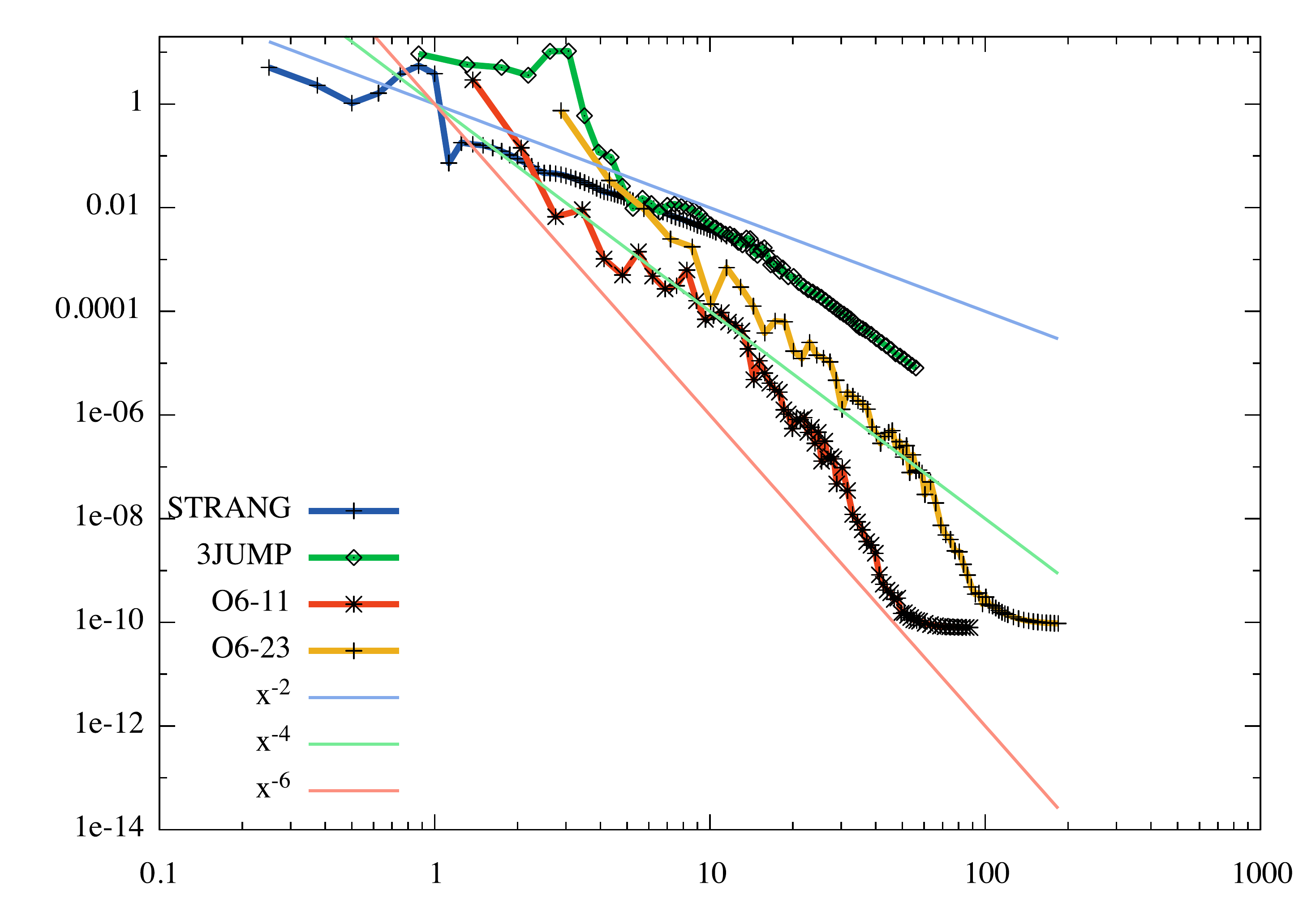}&
	\includegraphics[width=6cm]{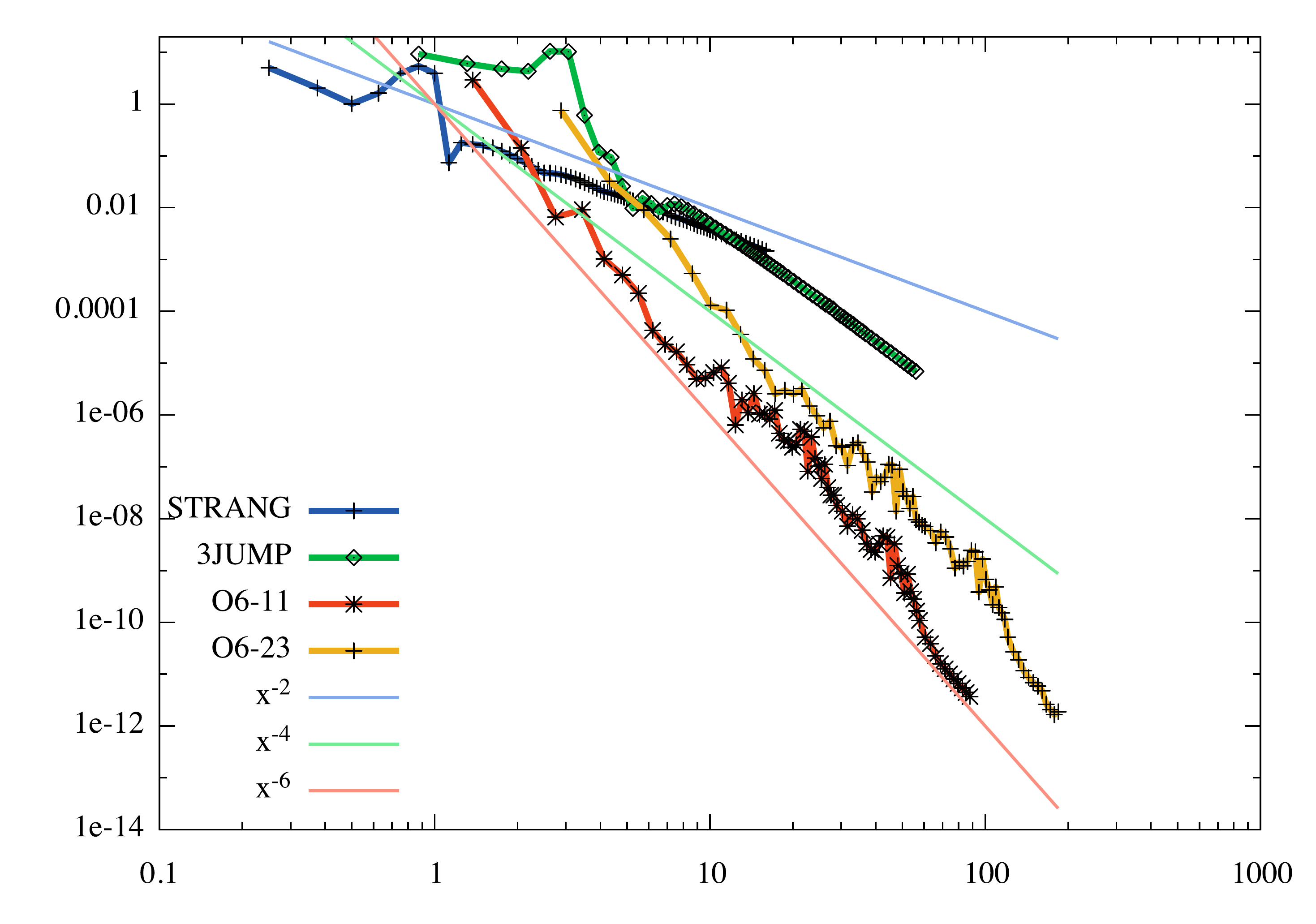}&
	\end{tabular}
		\caption{Case $d=1$. err$_\Hc$ (defined by \eqref{diag_energy}) as a function of $\sigma/\tau$ where $\sigma$ is the number of flows and $\tau$ the time step, for the different splitting methods. From left to right, $N=64, 128$.  }
\label{fig:nrj_vp1d_N}
\end{figure}

\begin{figure}
	\begin{tabular}{cclll}
		\includegraphics[width=6cm]{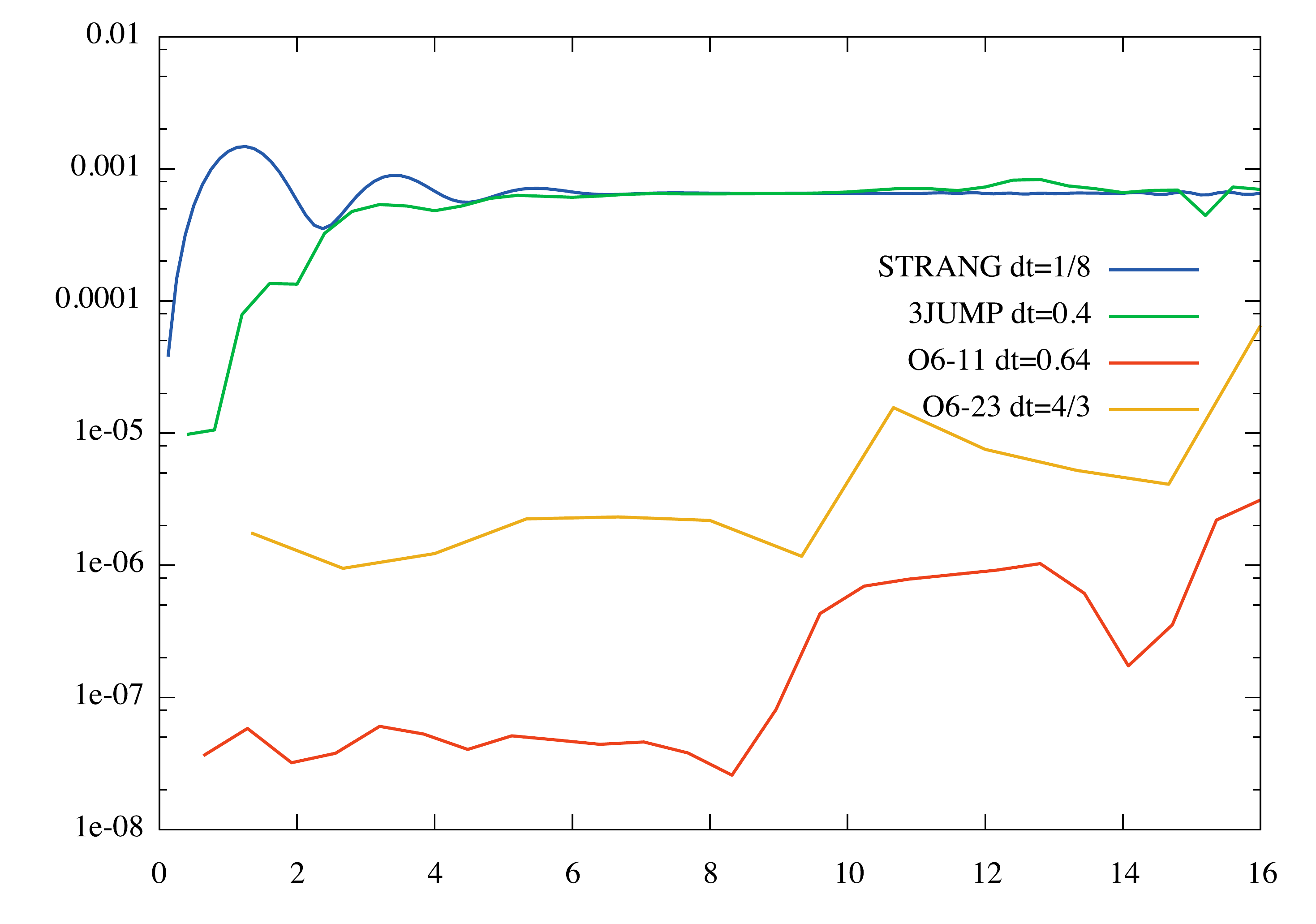}&
                 \includegraphics[width=6cm]{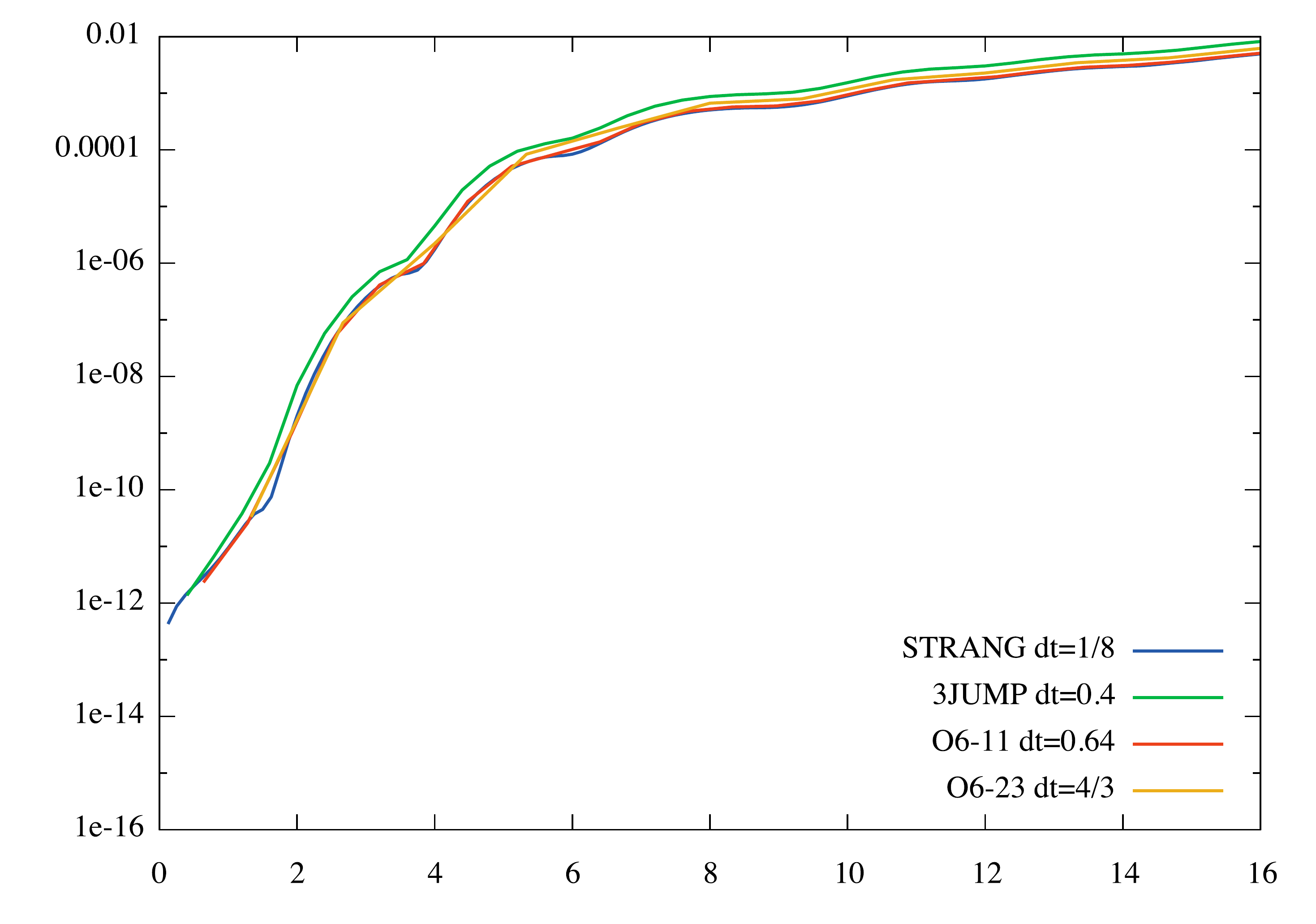}\\
	\includegraphics[width=6cm]{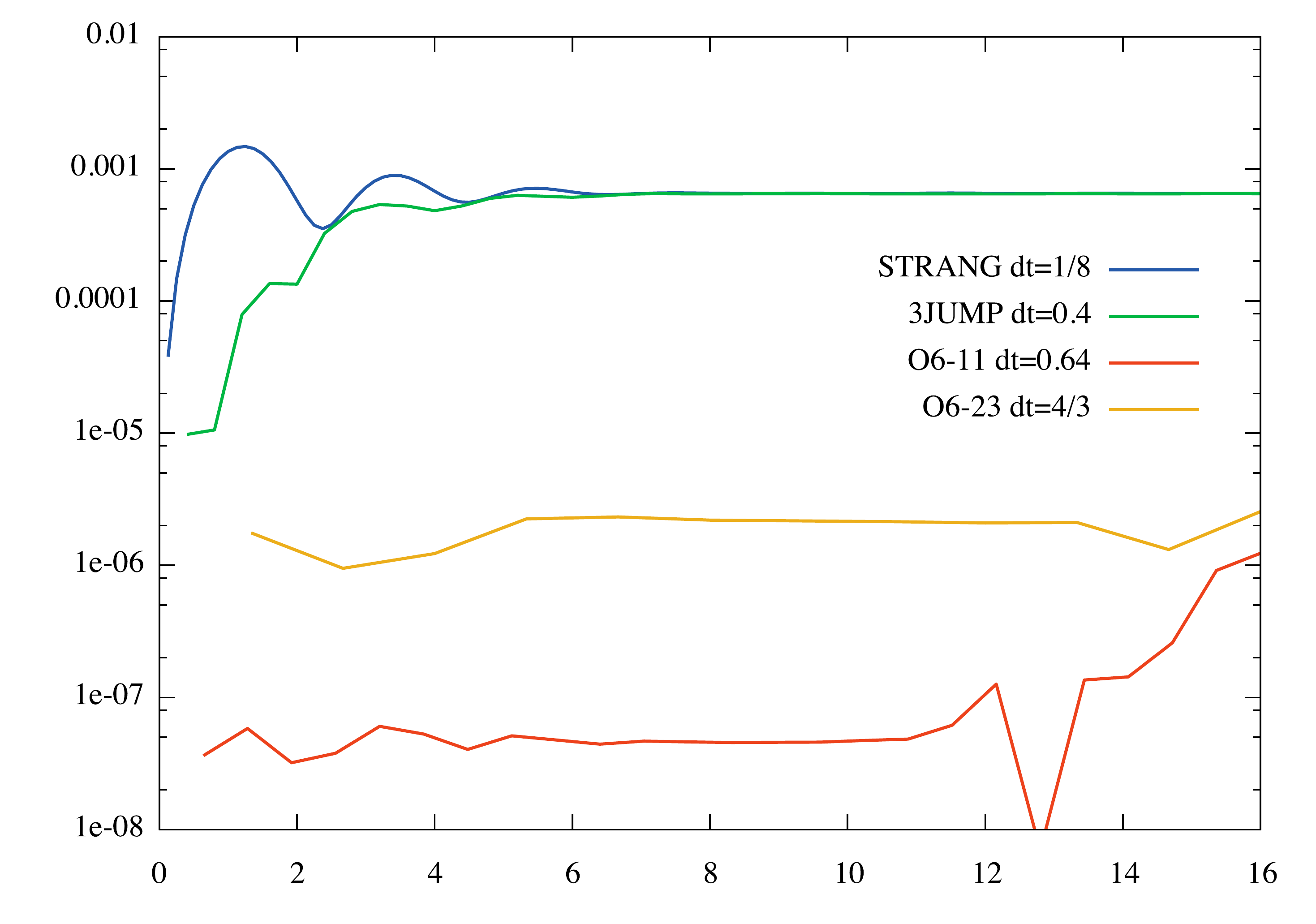}&
		\includegraphics[width=6cm]{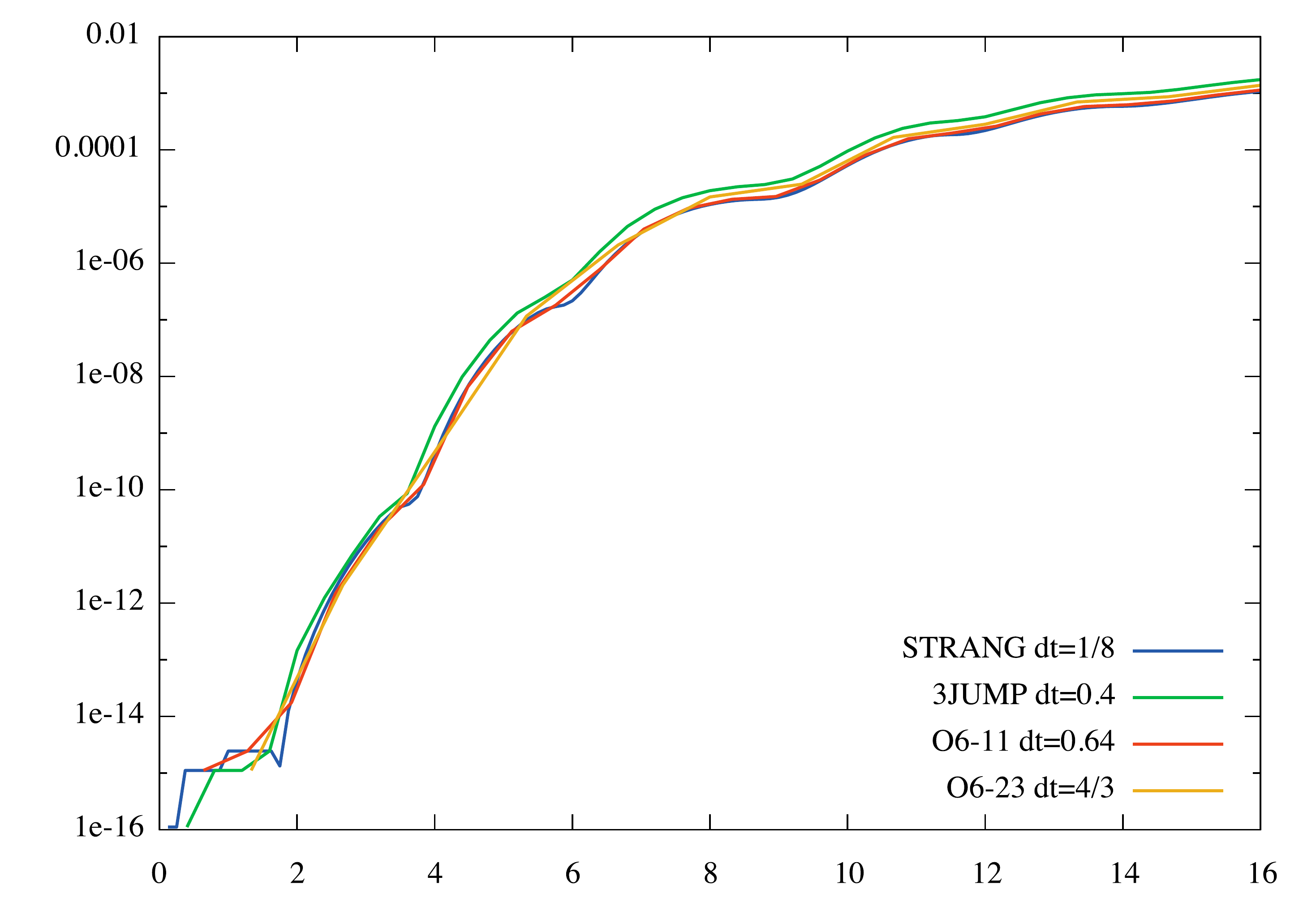}\\
	\includegraphics[width=6cm]{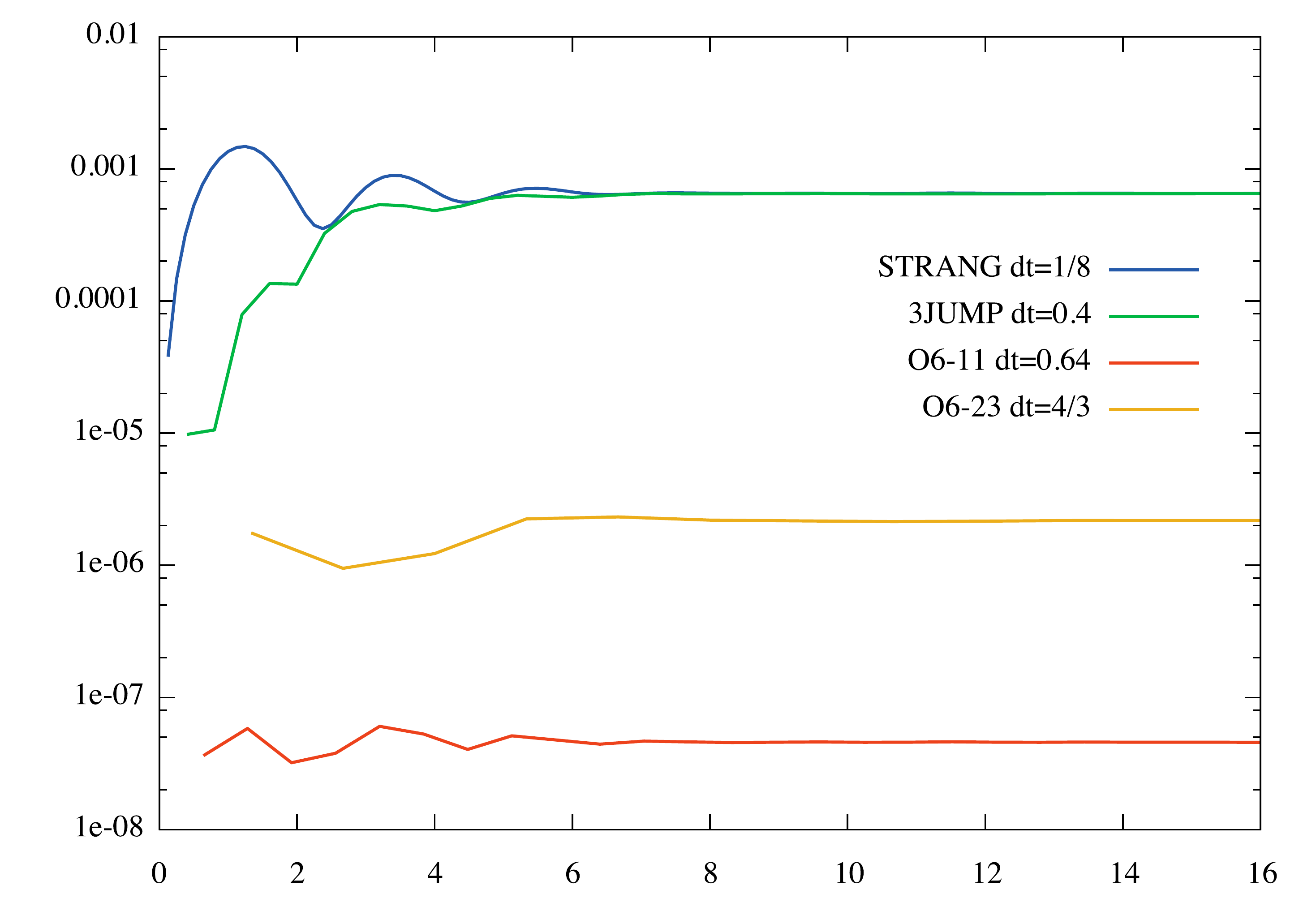}&
		\includegraphics[width=6cm]{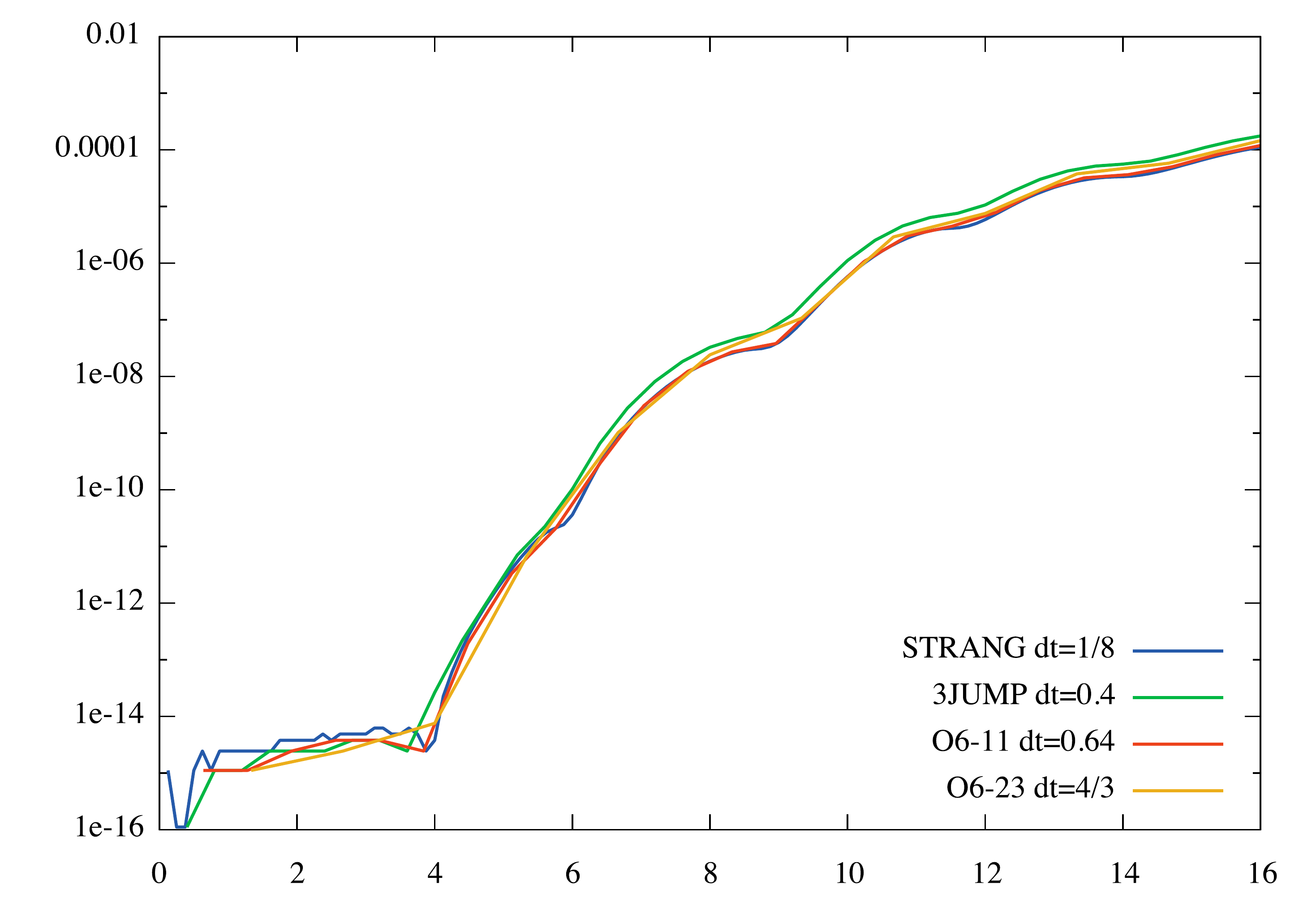}
	\end{tabular}
		\caption{Case $d=1$. Time history of $\Hc(f_h)$ (left column) and of $\|f_h(t)\|_{L^2}$ norm (right column).
		From top to bottom, $N=64, 128, 256$. Comparison of STRANG, 3JUMP, 06-11 and 06-23 at almost constant 		CPU time. }
	\label{fig:nrj_vp1d_time}
\end{figure}

	\begin{figure}
		\includegraphics[width=6cm]{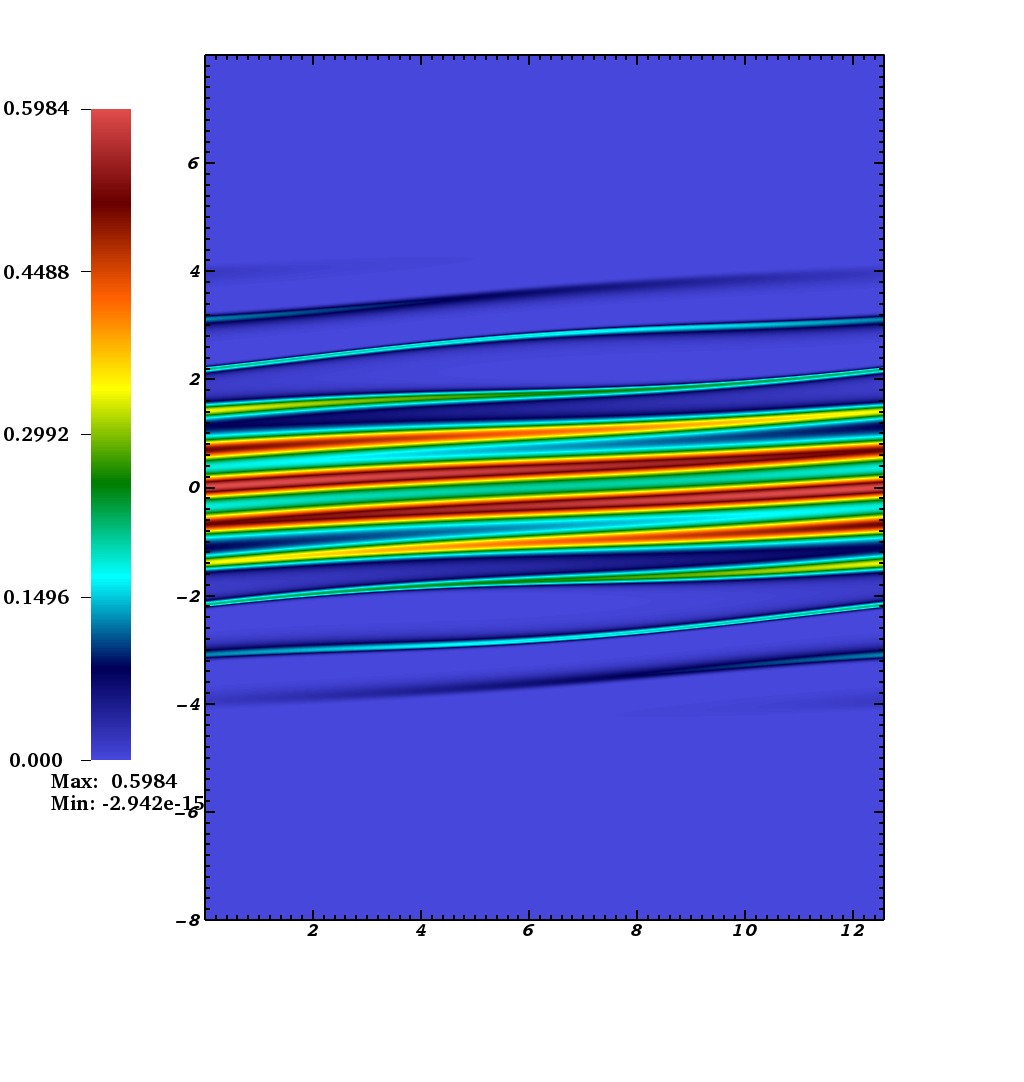}
		\caption{Case $d=1$. Phase space distribution function $f(t=16, x, v)$ obtained with 06-11. $N=1024$. }
		\label{fdis}
	\end{figure}

	\begin{figure}
	\begin{tabular}{cclll}
\includegraphics[width=6cm]{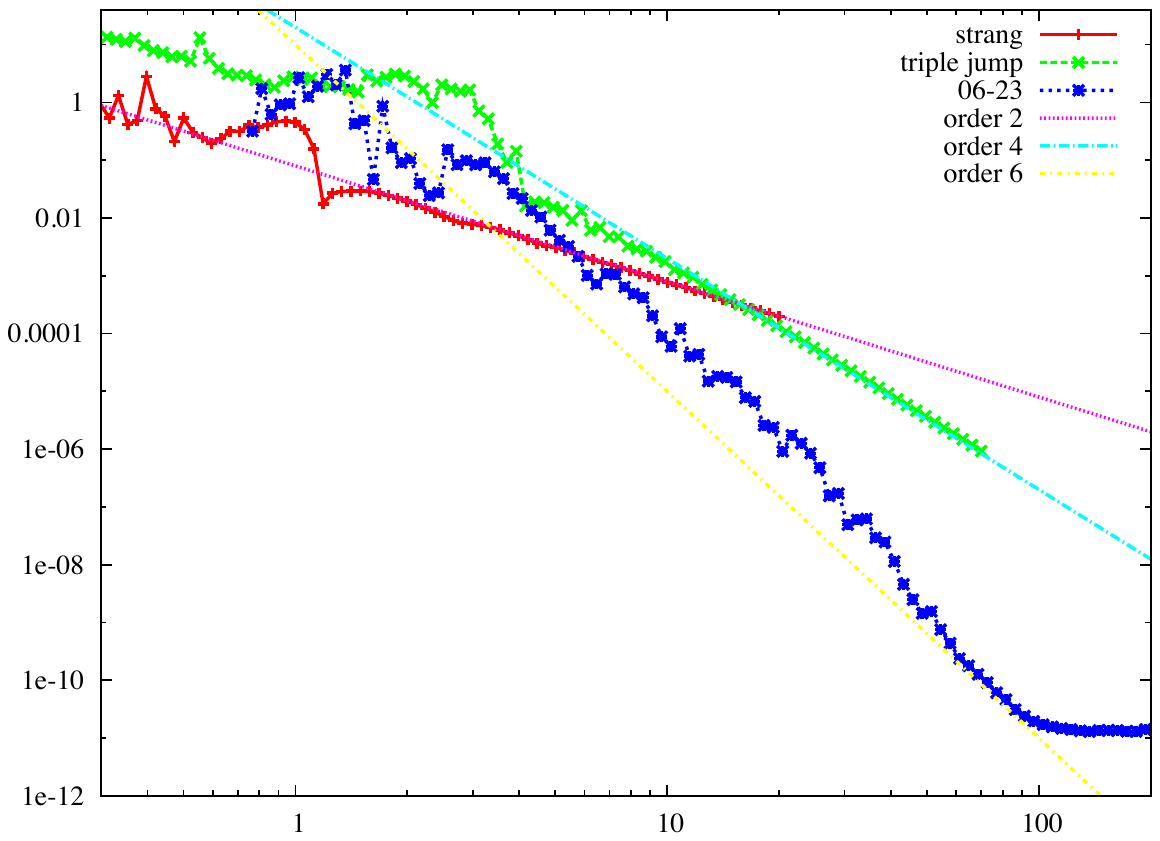}	&
\includegraphics[width=6cm]{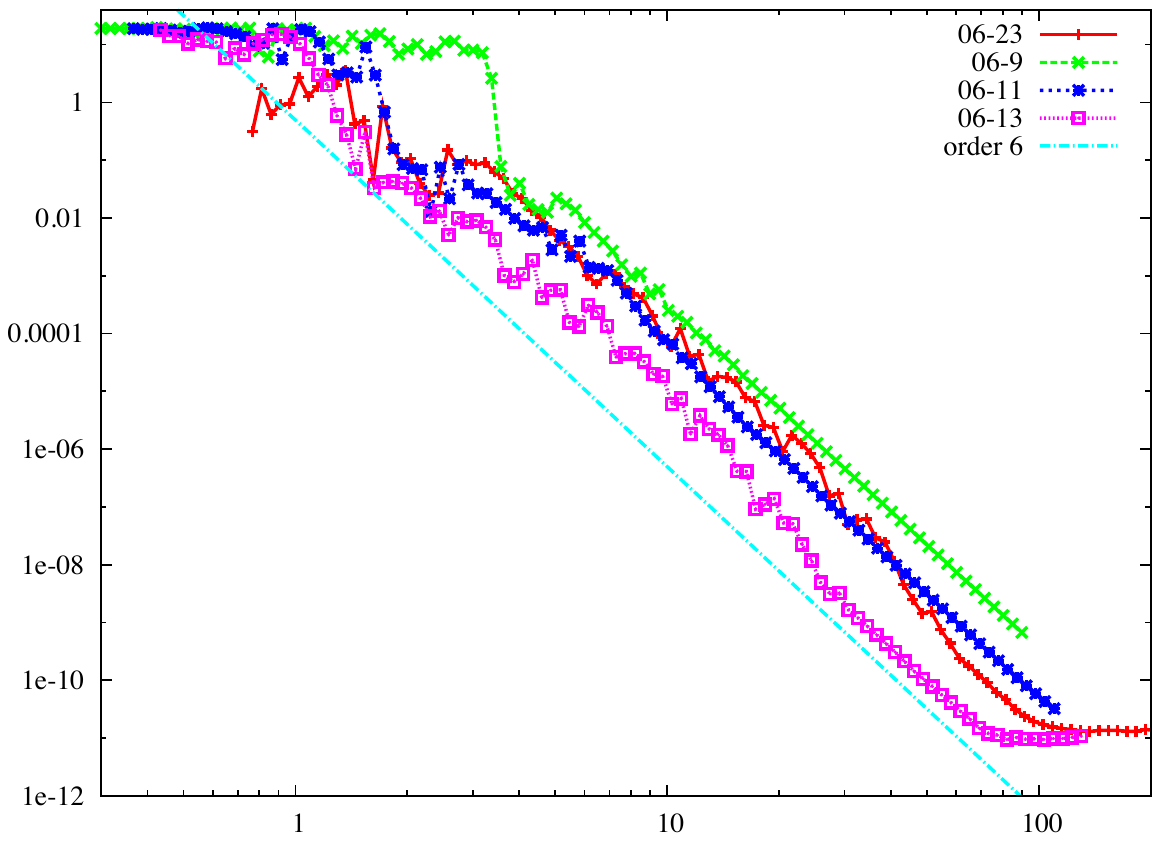}
\end{tabular}
		\caption{Case $d=2$: err$_\Hc$ (defined by \eqref{diag_energy}) as a function of $\sigma/\tau$ where $\sigma$ is the number of flows
		and $\tau$ the time step, for the different splitting methods. $N=64$.}
		\label{fig:nrj_vp2d_standard}
	\end{figure}

	\begin{figure}
	\begin{tabular}{cclll}
		\includegraphics[width=6cm]{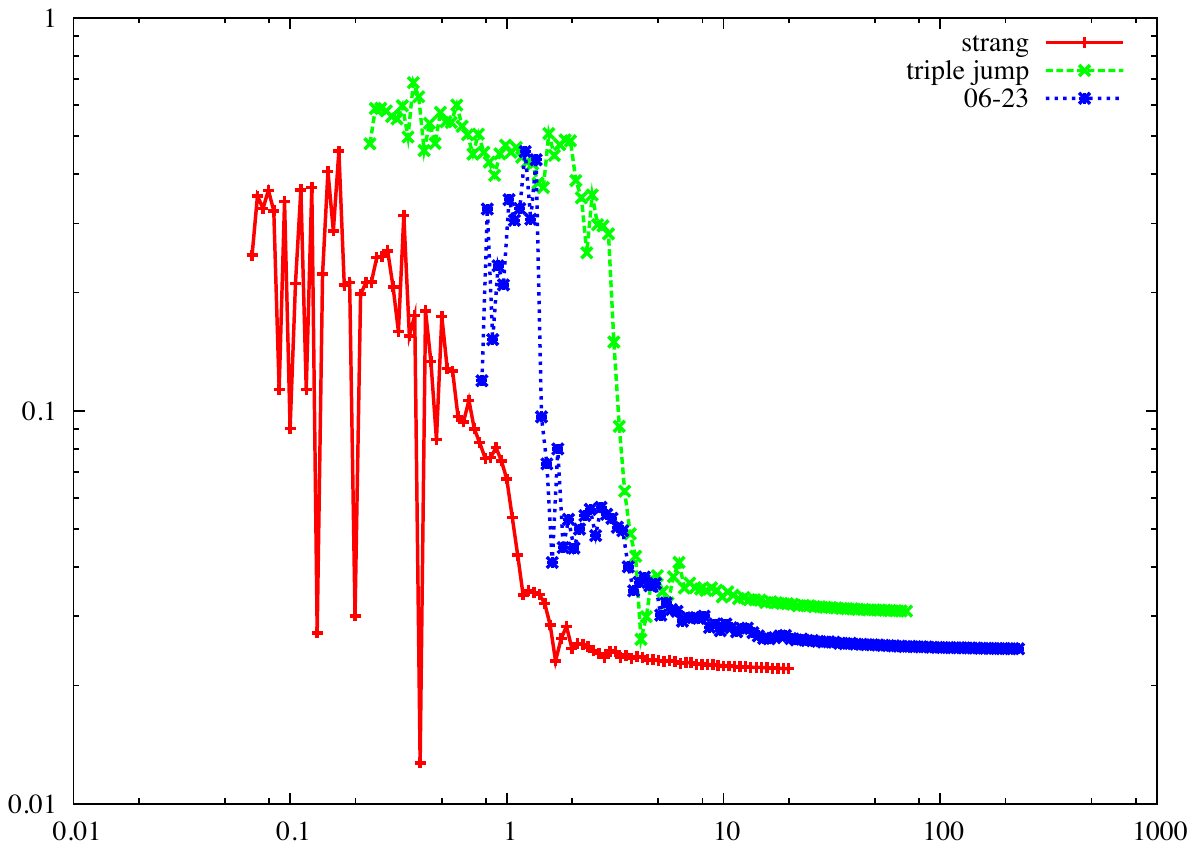} &
		\includegraphics[width=6cm]{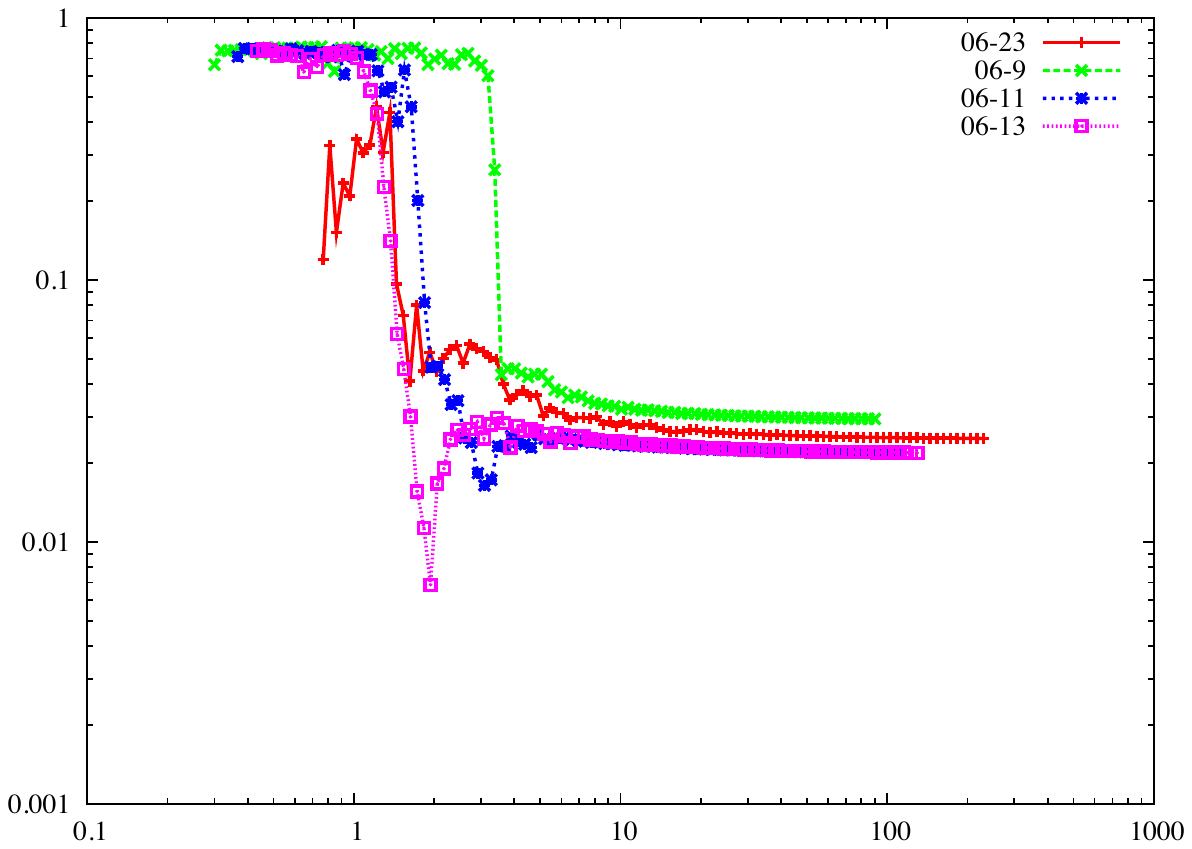}
		\end{tabular}
		\caption{Case $d=2$: err$_{L^2}$ (defined by \eqref{diag_Lp}) as a function of $\sigma/\tau$ where $\sigma$ is the number of flows
		and $\tau$ the time step, for the different splitting methods. $N=64$.}
		\label{fig:l2norm_standard}
	\end{figure}


\section{Convergence estimates}
\label{sec:conv}

This last section is devoted to the rigorous mathematical analysis of Vlasov--Poisson equations and their approximation by splitting methods of the form \eqref{dhom.1} satisfying the order conditions to ensure \eqref{dhom.1b}.

For a given multi-index $p  = (p_1,\ldots,p_d) \in \N^d$, we denote by $\partial_x^p$ the multi-derivative $\partial_{x_1}^{p_1} \cdots \partial_{x_d}^{p_d}$. Moreover, we set $|p| = p_1 + \ldots + p_d$. Similarly, we set $v^m := v_1^{m_1} \cdots v_d^{m^d}$ for $v = (v_1,\dots, v_d) \in \R^d$ and $m = (m_1,\ldots,m_d) \in \N^d$.
As functional framework, we will consider the spaces $\Hc^r_\nu$ equipped with the norms
\begin{equation}
\label{normeHs}
\Norm{f}{\Hc^r_\nu}^2 = \sum_{\substack{(m,p,q) \in \N^d \times \N^d \times \N^d \\ |p| + |q| \leq r \\Ê|m| \leq \nu}} \int_{\R^d} \int_{\T^d} | v^m \partial_x^p \partial_v^q f(x,v)|^2 \dd x \dd v,
\end{equation}
where, $\partial_x^p$ and $\partial_v^q$ denote the usual multi-derivative in the $x$ and $v$ variables.
In such spaces - already  considered in \cite{Degond86} -  and for $r$ and $\nu$ large enough,  the Vlasov--Poisson equation is well-posed and satisfies stability estimates ensuring the convergence of stable and consistent numerical methods, see Theorem \ref{th63appendix} and Lemma \ref{lem:1} below for precise estimates.

Before giving a complete proof of these results, we will state some useful estimates. In the following, we will denote by $L^q_x$ and $L^q$ for $q = 2$ and $q = \infty$ the standard $L^q$ spaces on $\T^d$ and $\T^d \times \R^d$ respectively. Similarly, for $r \geq 0$, $H^r_x$ and $H^r$ denote the standard Sobolev spaces on $\T^d$ and $\T^d \times \R^d$ respectively.
\begin{lemma}
\label{lem.51}
Let $\nu>d/2$. Then we have for $p = (p_1,\ldots,p_d) \in \N^d$
\begin{equation}
\label{alouette}
\Norm{\partial_x^p \phi(f)(x)}{L^2_x} \leq C \Norm{f}{\Hc^{(|p| -2)_+}_\nu},
\end{equation}
and
\begin{equation}
\label{bach}
\Norm{\partial_x^p \phi(f)(x)}{L^\infty_x} \leq C \Norm{f}{\Hc^{(|p| + \nu - 2)_+}_\nu}.
\end{equation}
\end{lemma}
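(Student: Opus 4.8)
The plan is to estimate $\phi(f)$ directly from its definition \eqref{eq:elec}. Write $\rho(f)(x) = \int_{\R^d} f(x,v)\,\dd v$ and recall that $\phi(f) = -\Delta_x^{-1}(\rho(f) - \bar\rho)$ where $\bar\rho$ is the (constant) average of $\rho(f)$ over $\T^d$; since we work on the torus with functions of zero $x$-average, $\Delta_x^{-1}$ gains two derivatives in the $H^r_x$ scale. Thus for any multi-index $p$,
\[
\Norm{\partial_x^p \phi(f)}{L^2_x} \leq C\,\Norm{\rho(f)}{H^{(|p|-2)_+}_x},
\]
the $(|p|-2)_+$ accounting for the case $|p|\le 2$ where no positive Sobolev regularity of $\rho$ is needed (one just uses boundedness of the Fourier multiplier away from the zero mode, together with conservation of mass to control the average). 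So everything reduces to the single elliptic bound
\[
\Norm{\rho(f)}{H^{s}_x} \leq C\,\Norm{f}{\Hc^{s}_\nu}, \qquad s \geq 0,
\]
valid whenever $\nu > d/2$.

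The key step is this last inequality, which is a weighted Sobolev embedding in the velocity variable. For a fixed multi-index $\alpha$ with $|\alpha|\le s$ we have $\partial_x^\alpha \rho(f)(x) = \int_{\R^d} \partial_x^\alpha f(x,v)\,\dd v$, so by Cauchy--Schwarz with the weight $\langle v\rangle^{2\nu} := (1+|v|^2)^\nu$,
\[
|\partial_x^\alpha \rho(f)(x)|^2 \leq \Big(\int_{\R^d}\langle v\rangle^{-2\nu}\,\dd v\Big)\,\Big(\int_{\R^d} \langle v\rangle^{2\nu}\,|\partial_x^\alpha f(x,v)|^2\,\dd v\Big),
\]
and the first integral is finite precisely because $\nu > d/2$. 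Integrating in $x$ over $\T^d$ and summing over $|\alpha|\le s$ gives
\[
\Norm{\rho(f)}{H^s_x}^2 \leq C \sum_{|\alpha|\le s}\int_{\T^d}\int_{\R^d}\langle v\rangle^{2\nu}|\partial_x^\alpha f|^2\,\dd v\,\dd x \leq C\,\Norm{f}{\Hc^s_\nu}^2,
\]
where the last step uses that $\langle v\rangle^{2\nu}$ is bounded by a constant times $\sum_{|m|\le\nu} v^{2m}$ (up to a harmless binomial factor), which is exactly the weight built into the norm \eqref{normeHs}; note only $x$-derivatives appear, so $|p|+|q| = |\alpha| \le s = r$ is consistent with the definition. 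This proves \eqref{alouette}.

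For \eqref{bach}, the route is to combine \eqref{alouette} with the one-dimensional-in-$x$ Sobolev embedding $H^{r}_x(\T^d)\hookrightarrow L^\infty_x(\T^d)$, valid for $r > d/2$; more conveniently, since $\nu$ is already assumed $>d/2$ one can absorb the embedding cost into the exponent: $\Norm{g}{L^\infty_x}\le C\Norm{g}{H^{\lceil d/2\rceil}_x}$ and $\lceil d/2\rceil \le \nu$ for $d = 1,2,3$, so
\[
\Norm{\partial_x^p\phi(f)}{L^\infty_x} \leq C \sum_{|\beta|\le \nu}\Norm{\partial_x^{p+\beta}\phi(f)}{L^2_x} \leq C\,\Norm{f}{\Hc^{(|p|+\nu-2)_+}_\nu},
\]
using \eqref{alouette} with $|p|$ replaced by $|p|+|\beta| \le |p|+\nu$ in the last inequality (and monotonicity of the $\Hc^r_\nu$ norms in $r$). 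The only mild obstacle is bookkeeping the $(\,\cdot\,)_+$ truncations and the zero-mode / mass-conservation argument needed to handle $|p|\le 2$; both are routine once one observes that $\Delta_x^{-1}$ acts on the zero-average part and the average of $\rho(f)$ equals the conserved mass, hence is controlled by $\Norm{f}{L^1}\le C\Norm{f}{\Hc^0_\nu}$ again by the weighted Cauchy--Schwarz above.
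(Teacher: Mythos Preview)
Your argument is correct and matches the paper's: weighted Cauchy--Schwarz in $v$ (using $\nu>d/2$ so that $\int\langle v\rangle^{-2\nu}\,\dd v<\infty$) to bound $\rho(f)$ in $H^s_x$, then the two-derivative gain from $\Delta_x^{-1}$, then Sobolev embedding for the $L^\infty$ estimate. One minor slip: the parenthetical claim $\|g\|_{L^\infty_x}\le C\|g\|_{H^{\lceil d/2\rceil}_x}$ is false for $d=2$ (since $H^1(\T^2)\not\hookrightarrow L^\infty$), but your displayed chain actually uses $|\beta|\le\nu$ with $\nu>d/2$, which is the correct embedding and exactly what the paper invokes.
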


\begin{proof}
For a given function $g(x,v)$, we have
$$
\left|\int_{\R^d}g(x,v) \dd v\right| \leq  \Big(  \int_{\R^d}\frac{1}{(1 + |v|^{2})^{\nu}} \dd v \Big)^{1/2} \Big( \int_{\R^d} (1 + |v|^{2})^{\nu} |g(x,v)|^2 \dd v \Big)^{1/2},
$$
as soon as $\nu > d/2$. Applying this formula, we first see with the definition \eqref{normeHs} that
$$
\int_{\T^d} \left|\int_{\R^d}g(x,v) \dd v\right|^2 \dd x \leq C \Norm{g}{\Hc^0_\nu}^2.
$$
We then obtain \eqref{alouette} by applying this formula to
$$
g(x,v) =\Delta_x^{-1} \partial_x^p f(x,v).
$$
After using the Gagliardo-Nirenberg inequality (easily obtained in Fourier)
$$
\Norm{\partial_x^p \phi(f)}{L_x^\infty} \leq C \Norm{\phi(x)}{H^{|p| + \nu}_x},
$$
for $\nu > d/2$,
we then deduce the second equation \eqref{bach}  from  \eqref{alouette}.
\end{proof}

We will first give a meaning to the expansion \eqref{eq:exp1}. To do this we will use the two following inequalities: for $r \geq 1$ and $\nu > d/2$,
\begin{equation}
\label{eq:exp2}
\Norm{v \cdot \partial_x f}{\Hc^r_\nu} \leq \Norm{f}{\Hc^{r+1}_{\nu +1}},\quad \mbox{and} \quad
\Norm{\partial_x \phi(f) \cdot \partial_v f}{\Hc_\nu^r} \leq C \Norm{f}{\Hc^{r+1}_{\nu}}^{2}.
\end{equation}
Indeed, the first is clear from the definition of the $\Hc^r_\nu$ norm. To prove the second, we use the fact that for given $(m,p,q) \in (\N^d)^3$ satisfying $|p| + |q| \leq r$ and Ê$|m| \leq \nu$ as in the definition of \eqref{normeHs}, we have
$$
\Norm{v^m \partial_x^p \partial_v^q (\partial_x \phi(f) \cdot \partial_v f)}{L^2}
\leq C \Norm{\phi(f)}{W_x^{r+1,\infty}} \Norm{f}{\Hc^{r+1}_{\nu}},
$$
where $W_x^{r+1,\infty}$ denotes the standard Sobolev space in the $x$ variable, controlling $(r+1)$ derivatives in $L^\infty$. Now, we conclude by using
$$
\Norm{\phi(f)}{W_x^{r+1,\infty}}  \leq C \Norm{f}{\Hc^{r -1 +\gamma}_\gamma},
$$
for all $\gamma > d/2$ (see \eqref{bach} above) with $d = 1,2,3$ and $\gamma < \min(\nu,2)$.
Using the Hamiltonian formalism $\mathrm{ad}_{\Tc}f = v\cdot \partial_xf$ and $\mathrm{ad}_{\Uc}f = \partial_x \phi(f) \cdot \partial_v f$, the estimates \eqref{eq:exp2} can be written
\begin{equation}
\label{prokofiev}
\Norm{\mathrm{ad}_{\Tc}f}{\Hc^r_\nu} \leq \Norm{f}{\Hc^{r+1}_{\nu+1}} \quad \mbox{and}\quad
\Norm{\mathrm{ad}_{\Uc}f}{\Hc^r_\nu} \leq C\Norm{f}{\Hc^{r+1}_\nu}^2,
\end{equation}
and a similar inequality for the operator $\mathrm{ad}_{\Hc}$. Hence, the expansion \eqref{eq:exp1} can be easily interpreted as follows (using a Taylor expansion in time):
\begin{lemma}
\label{fronsac}
Let $r > d/2 + 1$, $ d = 1,2,3$ and $N \in \N$, and $\mathcal{B}$ a bounded set of $\Hc^{r + N + 1}_{\nu + N+1}$. Then there exists $t_0$ and $C$ such that for all $t < t_0$ and all $f \in \Bc$,
$$
\Norm{\varphi^t_{\Hc}(f) - \sum_{k =0}^N \frac{t^k}{k!} \mathrm{ad}_{\Hc}^{k}f}{\Hc^r_\nu} \leq C t^{N+1}.
$$
\end{lemma}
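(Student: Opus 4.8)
The plan is to prove the estimate by a Taylor expansion in time with integral remainder, using the smoothing-loss estimates \eqref{prokofiev} to control each term, together with a bootstrap/a-priori estimate ensuring the exact flow stays in a bounded set of a suitable $\Hc^r_\nu$ space for short times. First I would recall that since $\mathcal{B}$ is bounded in $\Hc^{r+N+1}_{\nu+N+1}$, the local existence theory for the Vlasov--Poisson equation in these spaces (Theorem \ref{th63appendix} below, following \cite{Degond86}) guarantees a time $t_0>0$ and a larger bounded set $\Bc' \subset \Hc^{r+N+1}_{\nu+N+1}$ such that $\varphi^t_\Hc(f) \in \Bc'$ for all $t \le t_0$ and all $f \in \Bc$. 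The point of taking $r + N + 1$ regularity and $\nu + N + 1$ velocity moments in the hypothesis is precisely that applying $\mathrm{ad}_\Hc$ costs one derivative in $x$ and $v$ and one velocity weight (cf. \eqref{prokofiev} and the analogous inequality for $\mathrm{ad}_\Hc$), so that $\mathrm{ad}_\Hc^k f$ is well-defined and bounded in $\Hc^{r+N+1-k}_{\nu+N+1-k}$, hence in $\Hc^r_\nu$, for each $0 \le k \le N$, uniformly over $f \in \Bc$.

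Next I would write, for $g(t) := \varphi^t_\Hc(f)$, the identity $\frac{\dd}{\dd t} g(t) = \mathrm{ad}_\Hc g(t)$ and, by induction on $k$, $\frac{\dd^k}{\dd t^k} g(t) = \mathrm{ad}_\Hc^k g(t)$ — this requires knowing that $t \mapsto \mathrm{ad}_\Hc^j g(t)$ is differentiable in the relevant norm, which again follows from the a-priori boundedness in the higher space and the Lipschitz estimates on $\mathrm{ad}_\Hc$ in these scales. The Taylor formula with integral remainder then gives
\[
  g(t) - \sum_{k=0}^N \frac{t^k}{k!} \mathrm{ad}_\Hc^k g(0) = \frac{1}{N!}\int_0^t (t-s)^N \Big( \mathrm{ad}_\Hc^{N+1} g(s) \Big) \, \dd s,
\]
and since $g(0) = f$ and $\mathrm{ad}_\Hc^{N+1} g(s)$ is bounded in $\Hc^r_\nu$ uniformly for $s \le t_0$ and $f \in \Bc$ — here one uses $r + N + 1$ regularity to absorb the $N+1$ applications of $\mathrm{ad}_\Hc$ — the $\Hc^r_\nu$ norm of the remainder is bounded by $C \int_0^t (t-s)^N \, \dd s / N! = C t^{N+1}/(N+1)!$, which is the claimed bound (after renaming the constant).

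The main obstacle, and the step requiring care, is making rigorous the statement that the flow $\varphi^t_\Hc$ exists and is bounded in the high-regularity space $\Hc^{r+N+1}_{\nu+N+1}$ on a uniform time interval, together with the requisite time-differentiability of $s \mapsto \mathrm{ad}_\Hc^j \varphi^s_\Hc(f)$ in the lower norms: this is where the "loss of derivatives" phenomenon must be handled, and it is precisely the content of the well-posedness and stability results that the paper develops later in this section (Theorem \ref{th63appendix} and Lemma \ref{lem:1}). Granting those, the remaining estimates are a routine iteration of \eqref{prokofiev} and the corresponding bound for $\mathrm{ad}_\Hc$ (noting that $\mathrm{ad}_\Hc = \mathrm{ad}_\Tc + \mathrm{ad}_\Uc$, so it obeys an estimate of the form $\Norm{\mathrm{ad}_\Hc f}{\Hc^r_\nu} \le C(1 + \Norm{f}{\Hc^{r+1}_{\nu+1}})\Norm{f}{\Hc^{r+1}_{\nu+1}}$ on bounded sets), combined with the elementary bound $\int_0^t (t-s)^N\,\dd s = t^{N+1}/(N+1)$ in the integral remainder.
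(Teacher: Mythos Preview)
Your proposal is correct and follows exactly the route the paper indicates: the paper does not give a detailed proof of this lemma but simply states that it follows ``using a Taylor expansion in time'' together with the estimates \eqref{prokofiev}. Your write-up supplies precisely those details --- the integral-remainder Taylor formula, the iteration of the $\mathrm{ad}_\Hc$ bounds costing one derivative and one weight per application, and the appeal to the local well-posedness result (Theorem \ref{th63appendix}) to keep the flow bounded in $\Hc^{r+N+1}_{\nu+N+1}$ on a uniform time interval.
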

Of course the same lemma holds for the exact flows $\varphi_{\Tc}^t$ and $\varphi_{\Uc}^t$.
Using these expansions and the identification between splitting method \eqref{dhom.1} and methods based on composition of exponentials \eqref{dhom.2} which is done using the relation \eqref{eq:exp1}, we can make precise the notion of order that we consider in this paper. The algebraic conditions analyzed in Section \ref{derivation} and the previous estimates yields order $p$ methods in the following sense:
\begin{definition}
\label{gortoz}
The method \eqref{dhom.1} is said to be of order $p$, if there exist
$\nu_0, \nu_p \geq 0$ and $r_0,r_p\geq 0$ such that for all $\nu > \nu_0$,  $r > r_0$ and all bounded set $\mathcal{B}$ of $\Hc^{r+ r_p}_{\nu + \nu_p}$,  there exist $\tau_0$ and $C > 0$ such that for all $f \in \mathcal{B}$ and all $\tau \leq \tau_0$,
\begin{equation}
\label{order}
\Norm{\varphi_{\Hc}^\tau(f) - \psi_p^\tau(f)}{\Hc^r_\nu} \leq C \tau^{p+1}.
\end{equation}
\end{definition}
We will show in the next sections that the condition \eqref{order} implies the scheme is of order $p$ in the sense that it approximates the solution in $\Hc_\nu^r$ over a finite time interval $[0,T]$ with a precision $\mathcal{O}(\tau^p)$, provided the initial data is in $\Hc_{\nu + \nu_p}^{r + r_p}$.

\subsection{Existence of solutions}

The goal of this subsection it to prove the following result:
\begin{theorem}
\label{th63appendix}
Let $\nu > d/2$,  $r \geq  3 \nu$. There exists a constant $C_{r,\nu}$ and $L_{r,\nu}$ such that for all given $B > 0$ given and  $f_0 \in \Hc^{r+2 \nu + 1}_\nu$ such that $\Norm{f_0}{\Hc_\nu^{r + 2 \nu + 1}}\leq B$, then for all $\alpha, \beta \in [0,1]$, there exists a solution $f(t,x,v)$ of the Vlasov--Poisson equation
$$
\partial_t f + \alpha v \cdot \partial_x f - \beta \partial_x \phi(f) \cdot \partial_v f = 0,
$$
with initial value $f(0,x,v) = f_0(x,v)$, on the interval
\begin{equation}
\label{eq:T}
T := \frac{C_{r,\nu}}{1 + B},
\end{equation}
such that for all $t \in [0,T]$, we have the estimate
\begin{equation}
\label{karlsruhe}
\forall\, t \in [0,T], \quad  \Norm{f(t)}{\Hc^{r+2\nu + 1}_\nu} \leq \min(2B, e^{L_{r,\nu}  (1 + B) t}) \Norm{f_0}{\Hc^{r+2\nu + 1}_\nu}.
\end{equation}
Moreover, for two initial conditions $f_0$ and $g_0$ satisfying the previous hypothesis, we have
\begin{equation}
\label{eqlip}
\forall\, t \in [0,T], \quad \Norm{f(t) - g(t) }{\Hc^r_\nu} \leq e^{L_{r,\nu} (1+B) t}\Norm{f_0 - g_0 }{\Hc^r_\nu}.
\end{equation}
\end{theorem}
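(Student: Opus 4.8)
The plan is to run a standard Picard-type iteration / a priori energy estimate argument adapted to the weighted spaces $\Hc^r_\nu$, exactly in the spirit of \cite{Degond86}. First I would set up the iteration scheme: define $f^{(0)}(t) = f_0$ and let $f^{(n+1)}$ solve the \emph{linear} transport equation $\partial_t f^{(n+1)} + \alpha v\cdot\partial_x f^{(n+1)} - \beta\,\partial_x\phi(f^{(n)})\cdot\partial_v f^{(n+1)} = 0$ with data $f_0$. Since $\phi(f^{(n)})$ depends only on $x$ and the transport field $(\alpha v, -\beta\partial_x\phi(f^{(n)}))$ is divergence-free in $(x,v)$, each linear problem is solved along characteristics, and in particular all $L^p$ norms (and, with a Gronwall argument, all the weighted Sobolev norms) of $f^{(n+1)}$ are controlled by those of $f_0$ plus contributions from derivatives of $\phi(f^{(n)})$.

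The heart of the matter is the a priori estimate. Differentiating $\Norm{f^{(n+1)}(t)}{\Hc^{r+2\nu+1}_\nu}^2$ in time, integrating by parts, and using that the vector field is divergence-free, the transport terms either cancel or produce commutator terms in which at most $r+2\nu+1$ derivatives fall on $f^{(n+1)}$ and the remaining derivatives on the coefficients $\alpha v$ (a linear weight, handled by the extra $v$-moments built into the norm) or on $\partial_x\phi(f^{(n)})$. For the latter I would invoke Lemma \ref{lem.51}, specifically \eqref{bach}, to bound $\Norm{\partial_x\phi(f^{(n)})}{W^{k,\infty}_x}$ by $\Norm{f^{(n)}}{\Hc^{k+\nu-2}_\nu}$; the hypothesis $r \geq 3\nu$ is precisely what guarantees that $k+\nu-2 \leq r+2\nu+1$ for the relevant $k$, so the nonlinearity closes in the same space. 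This yields a differential inequality of the form $\frac{\dd}{\dd t}\Norm{f^{(n+1)}}{\Hc^{r+2\nu+1}_\nu} \leq L_{r,\nu}\big(1+\Norm{f^{(n)}}{\Hc^{r+2\nu+1}_\nu}\big)\Norm{f^{(n+1)}}{\Hc^{r+2\nu+1}_\nu}$. A bootstrap argument on $[0,T]$ with $T = C_{r,\nu}/(1+B)$ then shows by induction that every iterate stays in the ball of radius $2B$, giving the first bound in \eqref{karlsruhe}; the exponential bound comes directly from integrating the differential inequality.

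Next I would prove convergence of the iterates. Setting $g^{(n)} = f^{(n+1)} - f^{(n)}$, the difference solves a linear transport equation with the same field as the $(n{+}1)$-st iterate plus a source term $\beta\big(\partial_x\phi(f^{(n)}) - \partial_x\phi(f^{(n-1)})\big)\cdot\partial_v f^{(n)}$; estimating this in the \emph{lower} norm $\Hc^r_\nu$ (losing derivatives to absorb the extra $x$-regularity needed on $\phi$ via \eqref{bach}, which is exactly why the uniform bound is stated at the level $r+2\nu+1$ while Lipschitz-type estimates live at level $r$) gives $\Norm{g^{(n)}(t)}{\Hc^r_\nu} \leq C(B)\int_0^t \Norm{g^{(n-1)}(s)}{\Hc^r_\nu}\,\dd s$ on $[0,T]$, hence geometric convergence after possibly shrinking $T$ (absorbed into $C_{r,\nu}$). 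The limit $f$ solves the equation, lies in $L^\infty([0,T];\Hc^{r+2\nu+1}_\nu)$ by weak-$\ast$ lower semicontinuity of the norm, and satisfies \eqref{karlsruhe}. The stability estimate \eqref{eqlip} is then obtained by the same computation applied directly to the difference $f - g$ of two genuine solutions: their difference solves a linear transport equation with source $\beta(\partial_x\phi(f) - \partial_x\phi(g))\cdot\partial_v g$, and a Gronwall argument in $\Hc^r_\nu$, using \eqref{alouette}--\eqref{bach} and the uniform bound on both solutions, closes the estimate.

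The main obstacle is the bookkeeping in the a priori estimate: one must track precisely how many $x$- versus $v$-derivatives and how many $v$-weights are generated by commuting $v^m\partial_x^p\partial_v^q$ past the transport operator, and verify that the index $r+2\nu+1$ (together with the moment count $\nu$) is high enough that every term produced by the nonlinearity is controlled by Lemma \ref{lem.51} without exceeding the available regularity — this is exactly where the hypothesis $r \geq 3\nu$ and the particular "$+2\nu+1$" in the statement are used, and getting the constants $C_{r,\nu}$, $L_{r,\nu}$ to be uniform in $\alpha,\beta\in[0,1]$ is automatic since the estimates only ever use $|\alpha|,|\beta|\leq 1$.
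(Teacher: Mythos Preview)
Your proposal is correct and follows essentially the same route as the paper: Picard iteration with a frozen potential, an energy/commutator estimate exploiting the divergence-free structure of the transport field together with the potential bounds of Lemma~\ref{lem.51}, uniform control of the iterates in $\Hc^{r+2\nu+1}_\nu$ on a time interval of size $\sim(1+B)^{-1}$, contraction of successive differences in the lower norm $\Hc^r_\nu$, and then the same difference argument for \eqref{eqlip}. The only organizational difference is that the paper isolates your ``heart of the matter'' commutator computation as a standalone stability result (Lemma~\ref{lem:1}, estimate \eqref{cuisine}) for the linear transport equation with source, and then applies it three times (for the uniform bound, for the Cauchy property of the iterates, and for the Lipschitz estimate), whereas you describe redoing the computation inline each time.
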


Equations \eqref{karlsruhe} and \eqref{eqlip} show that the flow is locally bounded in $\Hc^{r+2\nu + 1}_\nu$ and locally Lipschitz in $\Hc^r_\nu$ for $r$ large enough ($\nu$ being essentially $d/2$).
Before proving the theorem, we will show a stability lemma that will be useful both for the local existence of solutions and the analysis of splitting methods.

\begin{lemma}
\label{lem:1}
Let $\alpha, \beta \in \R_+$, $\nu > d/2$ and $r > 3 \nu $ be given. There exists a constant $C > 0$ such that the following holds:
Assume that  $g(t) \in \Hc^{r}_\nu$  and $f(t)$ in $\Hc^r_\nu$ are continuous function of the time,
and let $h(t)$ be a solution of the equation
\begin{equation}
\label{tuyauxpvc}
\partial_t h   + \alpha \,  v \cdot \partial_x h - \beta \partial_x \phi(f) \cdot \partial_v h  =  g.
\end{equation}
Then we have
\begin{multline}
\label{cuisine}
\forall\, t > 0, \quad \Norm{h(t)}{\Hc^r_\nu}^2 \leq \Norm{h(0)}{\Hc^r_\nu}^2 + C \int_0^t ( \alpha + \beta \Norm{f(\sigma)}{\Hc^r_\nu}) \Norm{h(\sigma)}{\Hc^r_\nu}^2 \dd \sigma \\
+ 2 \int_0^t \Norm{g(\sigma)}{\Hc^r_\nu}\Norm{h(\sigma)}{\Hc^r_\nu} \dd \sigma.
\end{multline}

\end{lemma}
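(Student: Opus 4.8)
The plan is to establish the estimate by differentiating $\Norm{h(t)}{\Hc^r_\nu}^2$ in time and carefully bounding each term that arises. Working directly with the definition \eqref{normeHs}, for each multi-index triple $(m,p,q)$ with $|p|+|q|\leq r$ and $|m|\leq\nu$ I would apply $v^m\partial_x^p\partial_v^q$ to the equation \eqref{tuyauxpvc}, multiply by $v^m\partial_x^p\partial_v^q h$, and integrate over $\T^d\times\R^d$. The term $\frac{d}{dt}\half\Norm{v^m\partial_x^p\partial_v^q h}{L^2}^2$ appears on the left; on the right I collect: (i) the transport term $\alpha\,v\cdot\partial_x$, (ii) the field term $-\beta\,\partial_x\phi(f)\cdot\partial_v$, and (iii) the source term $g$. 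Summing over all admissible $(m,p,q)$ and integrating in time will produce \eqref{cuisine}, provided each commutator contribution is controlled by $C(\alpha+\beta\Norm{f}{\Hc^r_\nu})\Norm{h}{\Hc^r_\nu}^2$.

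The key computations are the commutator estimates. For the transport term, $v\cdot\partial_x$ commutes with $\partial_x^p$ and with $v^m$, but not with $\partial_v^q$; the commutator $[\partial_v^q, v\cdot\partial_x]$ produces lower-order terms of the form $\partial_x\partial_v^{q'}h$ with $|q'|<|q|$, each paired against $v^m\partial_x^p\partial_v^q h$ and controlled by $\Norm{h}{\Hc^r_\nu}^2$ after Cauchy--Schwarz; the principal (fully commuted) term $\int v^{2m}(\partial_x^p\partial_v^q h)\,v\cdot\partial_x(\partial_x^p\partial_v^q h)\,\dd x\,\dd v$ vanishes because $v\cdot\partial_x$ is skew-adjoint (integration by parts in $x$, $v$ being a parameter). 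This yields the $\alpha$-contribution. For the field term, $\partial_x\phi(f)$ depends only on $x$, so the principal term $\int(\partial_x^p\partial_v^q h)\,\partial_x\phi(f)\cdot\partial_v(\partial_x^p\partial_v^q h)$ again vanishes by skew-adjointness of $\partial_x\phi(f)\cdot\partial_v$ in $v$. The remaining terms come from the Leibniz expansion of $\partial_x^p\big(\partial_x\phi(f)\cdot\partial_v h\big)$ distributing derivatives onto $\phi(f)$; these are estimated using Lemma \ref{lem.51} (specifically \eqref{bach}) to bound $\Norm{\partial_x^{p'}\phi(f)}{L^\infty_x}$ by $C\Norm{f}{\Hc^r_\nu}$ for $r>3\nu\geq |p'|+\nu-2$ — this is precisely where the hypothesis $r>3\nu$ is used, together with $\nu>d/2$ so that the $L^\infty$ embedding of Lemma \ref{lem.51} applies. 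The source term is handled trivially by Cauchy--Schwarz, giving the factor $2\int_0^t\Norm{g}{\Hc^r_\nu}\Norm{h}{\Hc^r_\nu}\,\dd\sigma$.

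The main obstacle is the bookkeeping in the field-term commutators: one must check that every term produced by Leibniz's rule applied to $v^m\partial_x^p\partial_v^q(\partial_x\phi(f)\cdot\partial_v h)$ — other than the principal one — involves at most $r$ derivatives of $h$ (so it sits in $\Hc^r_\nu$) and at most roughly $|p|+\nu-2$ derivatives of $\phi(f)$ in $L^\infty_x$ (so Lemma \ref{lem.51} applies with the budget $r>3\nu$). A term with $k$ derivatives falling on $\phi(f)$ leaves $|p|+|q|-k+1\leq r+1-k$ derivatives and one extra $\partial_v$ on $h$; pairing with $v^m\partial_x^p\partial_v^q h$ and integrating by parts once in $v$ to move that $\partial_v$ off $h$ restores the balance, at the cost of possibly differentiating $v^m$, which only lowers the $v$-weight. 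One must be slightly careful with the borderline case $k=0$ (the principal term, which vanishes) versus $k=1$; organizing the expansion so the vanishing term is isolated first makes the rest routine. Once the differential inequality $\frac{d}{dt}\Norm{h}{\Hc^r_\nu}^2\leq C(\alpha+\beta\Norm{f}{\Hc^r_\nu})\Norm{h}{\Hc^r_\nu}^2+2\Norm{g}{\Hc^r_\nu}\Norm{h}{\Hc^r_\nu}$ is in hand, integrating from $0$ to $t$ gives \eqref{cuisine} directly.
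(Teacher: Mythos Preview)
Your overall architecture---differentiate $\Norm{h}{\Hc^r_\nu}^2$, isolate the principal transport terms (which vanish by skew-adjointness), and bound the commutators---is the same as the paper's. The transport commutator and the source term are handled correctly. The gap is in the field commutator.

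In the Leibniz expansion of $v^m\partial_x^p\partial_v^q\big(\partial_{x_i}\phi(f)\,\partial_{v_i}h\big)$, a term with $k\neq 0$ derivatives landing on $\phi$ has the form $(\partial_x^k\partial_{x_i}\phi)\,v^m\partial_x^{p-k}\partial_v^q\partial_{v_i}h$. The $h$-factor already carries at most $|p|-|k|+|q|+1\le r$ derivatives, so no integration by parts in $v$ is needed there; your ``restores the balance'' remark is a red herring. The real difficulty is the $\phi$-factor: your plan is to bound $\partial_x^k\partial_{x_i}\phi$ in $L^\infty_x$ via \eqref{bach}, which costs $\Norm{f}{\Hc^{|k|+\nu-1}_\nu}$. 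But $|k|$ ranges up to $|p|\le r$, so $|k|+\nu-1$ can reach $r+\nu-1>r$, and the $L^\infty$ route simply fails for the top-order terms. Your claimed inequality ``$r>3\nu\ge |p'|+\nu-2$'' is not true in general and is not how the hypothesis $r>3\nu$ enters.

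The paper closes this gap by a two-regime split. When $|k|\le r+1-\nu$ one uses $L^\infty_x$ on $\phi$ exactly as you propose. When $|k|\ge r+1-\nu$ one instead places $\partial_x^k\partial_{x_i}\phi$ in $L^2_x$ (estimate \eqref{alouette}, costing $\Norm{f}{\Hc^{|k|-1}_\nu}\le\Norm{f}{\Hc^r_\nu}$) and puts the $h$-factor in $L^\infty$ via the Sobolev embedding $H^{2\nu}(\T^d\times\R^d)\hookrightarrow L^\infty$. In this regime $|p-k|+|q|\le \nu-1$, so the $h$-factor sits in $\Hc^{2\nu+\nu}_\nu=\Hc^{3\nu}_\nu$; the hypothesis $r>3\nu$ is used precisely here, to absorb this into $\Norm{h}{\Hc^r_\nu}$. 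You should replace your single $L^\infty$ estimate by this $L^\infty$/$L^2$ dichotomy on $\phi$.
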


\begin{proof}
Let $\mathcal{L}_{\kappa}$ be the operator $\Lc_\kappa h = \{Ê\kappa,h\}$ with
$$
\kappa(x,v) = \frac{\alpha}{2}|v|^2  + \beta\phi(f)(x).
$$
The equation \eqref{tuyauxpvc} is thus equivalent to $\partial_t h - \{\kappa,h\} = g$.
Let $D$ be a linear operator.
We calculate that
\begin{eqnarray*}
\frac{\dd}{\dd t} \Norm{Dh}{L^2}^2 &=& 2 \langle Dh, D \Lc_\kappa h \rangle_{L^2} + 2 \langle Dh, D g \rangle_{L^2}\\
&=& 2\langle Dh,  \Lc_\kappa D h \rangle_{L^2} + 2\langle Dh, [D, \Lc_\kappa] h \rangle_{L^2}+ 2 \langle Dh, D g \rangle_{L^2},
\end{eqnarray*}
where $[D, \Lc_\kappa] = D \Lc_\kappa - \Lc_\kappa D$ is the commutator between the two operators.
The first term in the previous equality can be written
$$
2\langle Dh,  \Lc_\kappa D h \rangle_{L^2} = \langle 1,\Lc_\kappa(Dh)^2 \rangle_{L^2} =  \langle  \Lc_\kappa^*1,(Dh)^2 \rangle_{L^2} = 0,
$$
where $\Lc_\kappa^*$ is the $L^2$ adjoint of $\Lc_\kappa$, upon using the fact that Hamiltonian vector fields are divergence free.
Hence we get
$$
\frac{\dd}{\dd t} \Norm{Dh}{L^2}^2 = 2\langle Dh, [D, \Lc_\kappa] h \rangle_{L^2}+ 2 \langle Dh, D g \rangle_{L^2}.
$$
Now we consider the operators   $D = D^{m,p,q} = v^m \partial_x^p \partial_v^q$ for given muti-indices $(m,p,q) \in \mathbb{N}^{3d}$ such that $|m|  \leq \nu$ and $|p| + |q| \leq r$.
It is then clear that the second term in the right-hand side can be bounded by $2 \Norm{h}{\Hc_\nu^r} \Norm{g}{\Hc_\nu^r}$, and we are led to prove that
\begin{equation}
\label{irvi}
|Ê\langle Dh, [D, \Lc_\kappa] h \rangle_{L^2}|Ê\leq C (\alpha + \beta \Norm{f}{\Hc_{\nu}^r})\Norm{h}{\Hc_{\nu}^r}^2.
\end{equation}
The operator $\Lc_\kappa$ can be split into a linear combination of operators of the form $\Lc_v^i  = v_i  \partial_{x_i} $ and $\Lc_\phi^i =  - \partial_{x_i} \phi(f)(x)   \partial_{v_i}$ for $i = 1,\ldots, d$. We compute that for any smooth function $h$
\begin{eqnarray*}
[ÊD^{m,p,q}, \Lc_v^i ] hÊ&=&  v^m  \partial_x^p \partial_v^q (v_i  \partial_{x_i} h) - (v_i  \partial_{x_i}) v^m  \partial_x^p \partial_v^q h\\
Ê&=&  v^m   \partial_{x_i} \partial_x^p \partial_v^q (v_i h) - v_i v^m \partial_{x_i}  \partial_x^p \partial_v^q h\\
Ê&=&  q_i   v^m   \partial_{x_i} \partial_x^p \partial_v^{q - \langle i \rangle} h + v^m \partial_{x_i} \partial_x^p v_i \partial_v^q h  - v_i v^m \partial_{x_i}  \partial_x^p \partial_v^q h\\
&=&  q_i D^{m,p + \langle i \rangle,  q - \langle i \rangle} h,
\end{eqnarray*}
where $\langle i\rangle $ is the multi-index with coefficients  $\delta_{ij}$ the Kronecker symbol, for $j = 1,\ldots, d$ (we  make the convention that $D^{m,p,q} = 0$ when $p$ or $q$ contains negative index). Hence, as $| p  + \langle i \rangle| + |q- \langle i \rangle| \leq r$ as soon as $|q- \langle i \rangle| \geq 0$ we get
$$
\Norm{[D^{m,p,q},\Lc_v^i] h}{L^2} \leq C \Norm{h}{\Hc^r_\nu},
$$
where the constant $C$ depends on $r$.
This gives the first term in the right-hand side of \eqref{irvi}.

For the second term, we compute
\begin{eqnarray}
[ÊD^{m,p,q}, \Lc_\phi^i   ]hÊ&=&  - v^m \partial_x^p \partial_v^q ( \partial_{x_i} \phi \partial_{v_i}h) + (\partial_{x_i} \phi \partial_{v_i})v^m \partial_x^p \partial_v^q h\nonumber \\
&=&  m_i (\partial_{x_i} \phi )v^{m - \langle i\rangle } \partial_x^p \partial_v^q h - \sum_{k \neq 0} \Big(\begin{matrix} p \\ k \end{matrix}\Big)( \partial_x^k \partial_{x_i} \phi) v^m \partial_x^{p-k} \partial_v^q \partial_{v_i} h, \label{paugam}
\end{eqnarray}
(with the usual convention that $v^m = 0$ if $m$ contains a negative index).
The first term is easily bounded: we have
$$
\Norm{(\partial_{x_i} \phi )v^{m - \langle i\rangle } \partial_x^p \partial_v^q h}{L^2} \leq C \Norm{\partial_{x_i} \phi}{L^\infty_x}\Norm{h}{\Hc^{r}_\nu},
$$
and using \eqref{bach} with $p = \langle i \rangle$, we obtain the estimate, as $r \geq \nu - 1$.

In the second term, when $|k| +  \nu - 1  \leq r$ we can estimate directly
\begin{eqnarray*}
\Norm{( \partial_x^k \partial_{x_i} \phi) v^m \partial_x^{p-k} \partial_v^q \partial_{v_i}h }{L^2} &\leq& \Norm{\partial_x^k \partial_{x_i} \phi(f)}{L^{\infty}_x}  \Norm{h}{\Hc^r_\nu} \\
&\leq & C\Norm{f}{\Hc_\nu^{|k| + 1 + \nu - 2}}\Norm{h}{\Hc^r_\nu},
\end{eqnarray*}
after using \eqref{bach}, which gives the desired bound.

When $|k| \geq r + 1 -  \nu$ we can estimate
\begin{eqnarray*}
\Norm{( \partial_x^k \partial_{x_i} \phi) v^m \partial_x^{p-k} \partial_v^q \partial_{v_i}h }{L^2} &\leq& \Norm{ \partial_x^k \partial_{x_i} \phi}{L^2_x} \Norm{v^m \partial_x^{p-k} \partial_v^q \partial_{v_i}h }{L^\infty}\\
&\leq&C \Norm{f}{\Hc_\nu^{|k|Ê-1 }}  \Norm{v^m \partial_x^{p-k} \partial_v^q \partial_{v_i}h }{H^{2\nu}}\\
&\leq& C\Norm{f}{\Hc_\nu^{|k|Ê-1 }} \Norm{h}{\Hc_\nu^{2 \nu + |p-k| + |q|Ê+ 1}},
\end{eqnarray*}
by using \eqref{alouette} and Gagliardo-Nirenberg inequality in $\T^dÊ\times \R^d$.
Now in the sum of the second term in \eqref{paugam} we have $|k| \leq |p| \leq r$ otherwise the term is zero.
We thus have
$|p-k| + |q| \leq |p| - |k| + |q|$. As $|p| + |q| \leq r$,  then under the condition $|k| \geq r + 1 -  \nu$ considered here, we have
$|p-k| + |q| \leq \nu - 1$. We thus get the result, provided $3 \nu < r$, in order to bound both terms in the previous equation with the help of  $\Hc^r_\nu$ norms.
\end{proof}

\begin{proof}[Proof of Theorem \ref{th63appendix}]
We define the sequence of function $(f_n(t,x,v))_{n \in \N}$ as follows: for $t \in [0,T]$, $f_0(t,x,v) = f_0(x,v)$, and for $n \geq 0$, given $f_n \in \Hc^{r+2\nu + 1}_\nu$, we set $f_{n+1}(t,x,v)$ the solution of
\begin{equation}
\label{eq:rec}
\partial_t f_{n+1} + \alpha v \cdot \partial_x f_{n+1} - \beta \partial_x \phi(f_n) \cdot \partial_v f_{n+1} = 0,
\end{equation}
with initial value $f_{n+1}(0,x,v) = f_0(x,v)$.
Let $h_n$ be the Hamiltonian
$$
h_n (x,v) =  \frac{\alpha}{2} {|v|}{}^2 + \beta \phi(f_n)(x),
$$
and $\chi^t_n(x,v)$ its flow. Note that as $f_n$ is in $\Hc^{r+2\nu + 1}_\nu$ the flow of the microcanonical Hamiltonian $h_n$ is well defined. Moreover, this flow exists globally in time, since $\phi(f_n)(x)$ is bounded as $x \in \T^d$ a compact domain. The function $f_{n+1}(t,x,v)$ is thus well defined using characteristics: $f_{n+1}(t,x,v) = f_0( \chi^{-t}_n(x,v))$.

Let us apply  Lemma \ref{lem:1} with the space $\Hc^{r+2\nu + 1}_\nu$. We get for all $t > 0$,
\begin{multline*}
\Norm{f_{n+1}(t)}{\Hc^{r+2\nu + 1}_\nu}^2 \leq \Norm{f_0}{\Hc^{r+2\nu + 1}_\nu}^2 \\ + C \int_0^t ( \alpha + \beta \Norm{f_n(\sigma)}{\Hc^{r+2\nu + 1}_\nu}) \Norm{f_{n+1}(\sigma)}{\Hc^{r+2\nu + 1}_\nu}^2 \dd \sigma.
\end{multline*}
This shows by induction that we have
$$
\forall\, n,\quad\forall\, t \in [0,T], \quad  \Norm{f_n(t)}{\Hc^{r+2\nu + 1}_\nu} \leq 2 \Norm{f_0}{\Hc^{r+2\nu + 1}_\nu} \leq 2B,
$$
provided $T$ is small enough, namely $CT (\alpha + 2\beta \Norm{f_0}{\Hc^{r+2\nu + 1}_\nu}) < 1/4$ (which is implied if we assume \eqref{eq:T} for a suitable constant $C_{r,\nu}$). An application of Gronwall's lemma then implies \eqref{karlsruhe}.

Now  we can write that
\begin{multline*}
\partial_t(f_{n+1} - f_n) + \alpha v \cdot \partial_x (f_{n+1}  - f_n) - \beta  \partial_x \phi(f_n) \cdot \partial_v (f_{n+1} - f_n)\\
= \beta \partial_{x} \phi (f_{n} - f_{n-1}) \cdot \partial_v f_n.
\end{multline*}
Using Lemma \ref{lem:1} with the space $\Hc^r_\nu$, there exists a constant $C$ such that
\begin{multline*}
\Norm{f_{n+1}(t) - f_n(t)}{\Hc^r_\nu}^2 \leq C( \alpha + 2 \beta B) \int_0^t  \Norm{f_{n+1}(\sigma) - f_n(\sigma)}{\Hc^r_\nu}^2 \dd \sigma \\
+ 2 \int_0^t  \Norm{f_{n+1}(\sigma) - f_n(\sigma)}{\Hc^r_\nu} \Norm{\partial_{x} \phi (f_{n} - f_{n-1}) \cdot \partial_v f_n}{\Hc^r_\nu}  \dd \sigma.
\end{multline*}
Now for an operator of the form $D = v^m \partial_x^p \partial_v^q$ with $|p| + |q| \leq r$ and $|m| \leq \nu$, we have
\begin{align*}
\Norm{D (\partial_{x} \phi &(f_{n} - f_{n-1}) \cdot \partial_v f_n) }{L^2} \\
&\leq
C \Norm{ \phi (f_{n} - f_{n-1})}{H^{r+1}_x} \sum_{|a| + |b| \leq r+1}\Norm{  v^m \partial_x^a \partial_v^b f_n}{L^\infty}\\
&\leq C \Norm{f_n - f_{n-1}}{\Hc^r_\nu} \Norm{f_n}{\Hc^{r + 2\nu + 1}_\nu} ,
\end{align*}
using \eqref{alouette} and Gagliardo-Nirenberg inequality in $\T^d \times \R^d$.
Hence, using the previous uniform bound on $\Norm{f_n}{\Hc^{r + 2\nu + 1}_\nu} $, we obtain
\begin{multline*}
\Norm{f_{n+1}(t) - f_n(t)}{\Hc^r_\nu}^2 \leq C(1+B) \int_0^t  \Norm{f_{n+1}(\sigma) - f_n(\sigma)}{\Hc^r_\nu}^2 \dd \sigma \\
+ C (1+B)\int_0^t  \Norm{f_{n+1}(\sigma) - f_n(\sigma)}{\Hc^r_\nu} \Norm{f_{n}(\sigma) - f_{n-1}(\sigma)}{\Hc^r_\nu}  \dd \sigma.
\end{multline*}
From this estimate we deduce by a Gronwall inequality that
$$
\Norm{f_{n+1}(t) - f_n(t)}{\Hc^r_\nu}^2 \leq e^{3(1+B)T/2} (1+B)/2 \int_0^T\Norm{f_n(\sigma) - f_{n-1}(\sigma)}{\Hc^r_\nu}^2 \dd \sigma.
$$
For $T(1+B)$ - see estimate \eqref{eq:T} -  sufficiently small, this shows that
$$
\sup_{t \in [0,T]} \Norm{f_{n+1}(t) - f_n(t)}{\Hc^r_\nu}  \leq \frac{1}{2} \sup_{t \in [0,T]} \Norm{f_{n}(t) - f_{n-1}(t)}{\Hc^r_\nu}.
$$
We deduce that the sequence $f_n$ converges in $\Hc_\nu^r$, uniformly in time. The limit is then a solution of the Vlasov--Poisson equation in $C^1([0,T], \Hc_\nu^r)$.

Now, if we take two solutions, we have as before
$$
\partial_t(f - g) = -\alpha v \cdot \partial_x (f - g) + \beta  \partial_x \phi(f) \cdot \partial_v (f - g)
+ \beta \partial_{x} \phi (f-  g) \cdot \partial_v g.
$$
Using the previous lemma and the estimates we have on $f$ and $g$ on the interval $[0,T]$, we get as before
\begin{multline*}
\Norm{f(t) - g(t)}{\Hc^r_\nu}^2 \leq \Norm{f(0) - g(0)}{\Hc_\nu^r} \\+ e^{3(1+B)T/2} (1+B)/2 \int_0^t\Norm{f(\sigma) - g(\sigma)}{\Hc^r_\nu}^2 \dd \sigma,
\end{multline*}
from which we easily deduce the second estimate \eqref{eqlip} (using \eqref{eq:T}).
\end{proof}

\subsection{Convergence of splitting methods}

\subsubsection{Classical splitting methods}

We can now prove the following convergence result:
\begin{theorem}
\label{th23}
Let $\psi_p^\tau$ a splitting method of the form \eqref{dhom.1} fulfilling the condition of Definition \ref{gortoz} for some number $p$ and sufficiently large indices $\nu_0,\nu_p, r_0,r_p$.
Then it is convergent in the following sense:
For given $\nu > \nu_0$ and $r > r_0$, there exists $C_*$ such that for  $f \in \Hc^{r+ r_p}_{\nu + \nu_p}$,   $\varphi_{\Hc}^t(f)$ exists for  $t \in [0,T]$ with
$T = C_{*} (1+B)^{-1}$ with $B = \Norm{f}{\Hc^{r+ r_p}_{\nu + \nu_p}}$,  and  there exist $\tau_0$ and $C$ such that for all $\tau \leq \tau_0$, we have
\begin{equation}
\label{convergence}
\Norm{(\psi_p^\tau)^n (f)  - \varphi_{\Hc}^t(f)}{\Hc^r_\nu} \leq C \tau^p,
\end{equation}
for $t=n \tau \leq T$.
\end{theorem}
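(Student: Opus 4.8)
The theorem has exactly the shape of a Lax-type ``stability $+$ consistency $\Rightarrow$ convergence'' result, and I would prove it by the classical ``Lady Windermere's fan'' telescoping. Three ingredients are needed: the one-step (local) error bound, which is precisely what Definition \ref{gortoz} provides; a one-step \emph{stability} estimate for the numerical propagator $\psi_p^\tau$ in the norm of $\Hc^r_\nu$; and a priori bounds keeping the exact and the numerical trajectories inside a fixed ball of a higher-regularity space over $[0,T]$. Fix $\nu>\nu_0$, $r>r_0$ and set $B:=\Norm{f}{\Hc^{r+r_p}_{\nu+\nu_p}}$ and $T:=C_*(1+B)^{-1}$. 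Here $\nu_0,\nu_p,r_0,r_p$ are chosen large enough that (a) Definition \ref{gortoz} applies in $\Hc^{r+r_p}_{\nu+\nu_p}$, and (b) both $\Hc^r_\nu$ and $\Hc^{r+r_p}_{\nu+\nu_p}$ are spaces to which Theorem \ref{th63appendix} and Lemma \ref{lem:1} apply, the $\Hc^{r+r_p}_{\nu+\nu_p}$-norm of the exact solution being controlled up to a factor $2$ on $[0,T]$. This is what ``sufficiently large indices'' means, and checking the mutual compatibility of all these index conditions (routine but fiddly) is a secondary technical point I would settle first.

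\textbf{A priori bounds.} By Theorem \ref{th63appendix} (with $\alpha=\beta=1$) the exact flow $w(t):=\varphi_{\Hc}^t(f)$ exists on $[0,T]$ and satisfies $\Norm{w(t)}{\Hc^{r+r_p}_{\nu+\nu_p}}\leq 2B$ there. Each elementary flow in \eqref{dhom.1} is the exact flow of $\partial_t g+\alpha\,v\cdot\partial_x g-\beta\,\partial_x\phi(g)\cdot\partial_v g=0$ with $(\alpha,\beta)=(1,0)$ for $\varphi_{\Tc}$ and $(\alpha,\beta)=(0,1)$ for $\varphi_{\Uc}$, run over a signed time $a_i\tau$ or $b_i\tau$; the negative coefficients in Tables \ref{tab.rkn.3}--\ref{tab.rkn.2} are handled by time-reversibility of these two equations (equivalently, Lemma \ref{lem:1} holds on two-sided time intervals). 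Applying Lemma \ref{lem:1} with $g=0$ shows that each such elementary flow maps a ball of radius $\rho$ into a ball of radius $(1+C_\rho\tau)\rho$, in $\Hc^r_\nu$ and in the higher space of (b). Composing the $\sigma=2s+1$ blocks and iterating $n=t/\tau\leq T/\tau$ times, a bootstrap identical in spirit to the proof of Theorem \ref{th63appendix} (using that $T(1+B)$ is small) shows that for $\tau$ small all numerical iterates $(\psi_p^\tau)^k(f)$ with $k\tau\leq T$ stay in a fixed ball $\Bc_{\rho_0}$ of $\Hc^r_\nu$, and the partial compositions $\Phi_{j-1}\circ\cdots\circ\Phi_1(w(k\tau))$ of the blocks $\Phi_1,\dots,\Phi_\sigma$ of $\psi_p^\tau$ applied to the exact solution remain bounded by $2B$ in the higher norm.

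\textbf{One-step stability --- the crux.} I would next establish: for $\tau$ small, if $u\in\Bc_{\rho_0}$ and $w(k\tau)\in\Bc_{\rho_0}$ with $w(k\tau)$ additionally bounded by $2B$ in the higher norm, then
$$
\Norm{\psi_p^\tau(u)-\psi_p^\tau(w(k\tau))}{\Hc^r_\nu}\leq(1+\Lambda\tau)\,\Norm{u-w(k\tau)}{\Hc^r_\nu},\qquad\Lambda=\Lambda(B,r,\nu).
$$
This is proved by telescoping over the $\sigma$ blocks and bounding $\mathrm{Lip}(\Phi_j)$ at the relevant intermediate pair of points. For a $\Tc$-block the difference $h$ of two transport solutions solves $\partial_t h+a_i\,v\cdot\partial_x h=0$ and Lemma \ref{lem:1} (with $\beta=0$, source $0$) gives $\mathrm{Lip}(\Phi_j)=e^{C|a_i|\tau}=1+O(\tau)$ with no loss. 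For a $\Uc$-block, writing $h=a-b$ with $a,b$ solving $\partial_t g-\partial_x\phi(g)\cdot\partial_v g=0$ (so $\phi(a(t))\equiv\phi(a_0)$ is frozen), one has $\partial_t h-\partial_x\phi(a_0)\cdot\partial_v h=\partial_x\phi(a_0-b_0)\cdot\partial_v b$; Lemma \ref{lem:1} (with $f=a_0$, $\alpha=0$) controls $\Norm{h(b_i\tau)}{\Hc^r_\nu}$ in terms of $\Norm{h(0)}{\Hc^r_\nu}$ once the source $\partial_x\phi(a_0-b_0)\cdot\partial_v b$ is bounded in $\Hc^r_\nu$, and this is done exactly as in the derivation of \eqref{eqlip}, spending derivatives on $b$ (the higher norm, controlled by the a priori bound) while keeping only the low norm $\Hc^r_\nu$ on the difference $a_0-b_0$. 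The essential point is that the higher-norm bound is required on \emph{only one} of the two endpoints, which along the telescoping can always be taken to be (a partial composition of blocks applied to) the exact solution; hence no higher-norm control of the numerical solution is ever needed. Multiplying the $\sigma$ factors $1+O(\tau)$ yields the stated $1+\Lambda\tau$. I expect this step --- a genuine $(1+O(\tau))$ Lipschitz bound for $\psi_p^\tau$ in the low norm despite the derivative loss inherent in the nonlinearity $\partial_x\phi(g)\cdot\partial_v g$ --- to be the main obstacle.

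\textbf{Conclusion by telescoping and induction.} Let $e_k:=\Norm{(\psi_p^\tau)^k(f)-w(k\tau)}{\Hc^r_\nu}$. Splitting one step,
$$
(\psi_p^\tau)^{k+1}(f)-w((k+1)\tau)=\bigl(\psi_p^\tau((\psi_p^\tau)^k f)-\psi_p^\tau(w(k\tau))\bigr)+\bigl(\psi_p^\tau(w(k\tau))-\varphi_{\Hc}^\tau(w(k\tau))\bigr).
$$
The second bracket is $\leq C\tau^{p+1}$ in $\Hc^r_\nu$ by Definition \ref{gortoz}, since $w(k\tau)$ lies in the ball of radius $2B$ of $\Hc^{r+r_p}_{\nu+\nu_p}$; the first bracket is $\leq(1+\Lambda\tau)e_k$ by the one-step stability, provided $(\psi_p^\tau)^k(f)\in\Bc_{\rho_0}$, which holds as long as $e_k$ is not too large (because $w(k\tau)\in\Bc_{\rho_0}$). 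Inducting on $k$ (base case $e_0=0$) gives $e_{k+1}\leq(1+\Lambda\tau)e_k+C\tau^{p+1}$, whence by the discrete Gronwall lemma
$$
e_n\leq\frac{(1+\Lambda\tau)^n-1}{\Lambda\tau}\,C\tau^{p+1}\leq\frac{e^{\Lambda T}-1}{\Lambda}\,C\,\tau^p
$$
for every $n$ with $n\tau\leq T$; choosing $\tau_0$ small enough that this bound keeps every $(\psi_p^\tau)^k(f)$ inside $\Bc_{\rho_0}$ closes the induction and yields \eqref{convergence}.
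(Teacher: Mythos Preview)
Your argument is correct and follows the same Lady Windermere telescoping pattern as the paper, but you insert the \emph{other} intermediate term: you split
\[
(\psi_p^\tau)^{k+1}(f)-w(t_{k+1})=\bigl(\psi_p^\tau(f_k)-\psi_p^\tau(w(t_k))\bigr)+\bigl(\psi_p^\tau(w(t_k))-\varphi_{\Hc}^\tau(w(t_k))\bigr),
\]
whereas the paper uses
\[
f_{k+1}-w(t_{k+1})=\bigl(\psi_p^\tau(f_k)-\varphi_{\Hc}^\tau(f_k)\bigr)+\bigl(\varphi_{\Hc}^\tau(f_k)-\varphi_{\Hc}^\tau(w(t_k))\bigr).
\]
Consequently the roles of stability and consistency are swapped. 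In the paper, the stability step is the Lipschitz estimate for the \emph{exact} flow, which is already available as \eqref{eqlip}; the price is that both the local error (Definition~\ref{gortoz} applied at $f_k$) and the Lipschitz bound require the \emph{numerical} iterates $f_k$ to remain bounded in the higher space $\Hc^{r+r_p}_{\nu+\nu_p}$, and this is obtained directly by iterating \eqref{karlsruhe} block by block (yielding $\Norm{f_k}{\Hc^{r+r_p}_{\nu+\nu_p}}\leq e^{\kappa}B$). In your version the local error is evaluated at the exact solution, and the stability step is a Lipschitz bound for $\psi_p^\tau$ itself in $\Hc^r_\nu$, which you correctly build block by block from Lemma~\ref{lem:1} using the higher norm only on the exact-solution branch. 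What you gain is that you never need to propagate the numerical solution in the high norm; what you pay is that your ``crux'' step reproduces by hand, for the composite map $\psi_p^\tau$, an estimate the paper gets for free from \eqref{eqlip}. The paper's route is shorter for that reason.
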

\begin{proof}
We can assume that $r_0$ and $\nu_0$ (and hence $r$ ans $s$) are large enough to ensure that the solution $\varphi_{\Hc}^t(f)$ exists over the time interval $T = C_{r,\nu} (1+B)^{-1}$ given by Theorem \ref{th63appendix} with $\alpha = \beta = 1$, see \eqref{eq:T}.

Now let us consider a splitting method of the form \eqref{dhom.1} with coefficients $(a_{i}, b_i)$. Using Theorem \ref{th63appendix} with the indices $(r + r_p - 2 \nu - 1, \nu + \nu_p)$, we can apply  \eqref{karlsruhe} alternately to $\varphi_{\Tc}^{a_i \tau}$ (i.e. $\beta = 0$) and $\varphi_{\Uc}^{b_i \tau}$ (i.e. $\alpha = 0$), and we obtain
$$
\Norm{\psi_p^\tau(f)}{\Hc^{r+r_p}_{\nu + \nu_p}} \leq e^{L (1+B) \tau} \Norm{f}{\Hc^{r+r_p}_{\nu + \nu_p}},
$$
where
$$
L = L_{r + r_p - 2 \nu -1, \nu + \nu_p} \Big( \sum_{i = 1}^s |a_i| + \sum_{i = 1}^{s+1} |b_i| \Big).
$$
This implies the bound
$$
\forall\, n \tau \leq T, \quad
\Norm{(\psi_p^\tau)^n(f)}{\Hc^{r+r_p}_{\nu + \nu_p}} \leq e^{L (1+B) T} B = e^\kappa B,
$$
where $\kappa$ is a factor depending on $r,r_p,\nu$ and $\nu_p$, but not on $B$.
Let us use the notation $f(t) = \varphi_{\Hc}^t$ and $f_n = (\psi_p^\tau)^n(f)$. We can write
\begin{equation}
\label{ibanez}
\Norm{f_{n+1} - f(t_{n+1})}{\Hc_\nu^r} \leq \Norm{\psi_p^\tau(f_{n}) - \varphi^\tau_{\Hc}(f_{n})}{\Hc_\nu^r}
+ \Norm{\varphi^\tau_{\Hc} (f_n)  - \varphi^\tau_{\Hc} (f(t_n))}{\Hc_\nu^r}.
\end{equation}
By applying again Theorem \eqref{th63appendix} with the constant $e^\kappa B$ instead of $B$, we can define the flow $\varphi_{\Hc}^t$ over a time interval of the form $C_*(1+B)^{-1}$ by possibly adapting $C_*$ (this is due to the fact that $\kappa$ does not depend on $B$), and such that for all $f$ and $g$ satisfying $\Norm{f}{\Hc_{\nu + \nu_p}^{r + r_p}} \leq e^{\kappa}B$, and $\Norm{f}{\Hc_{\nu + \nu_0}^{r + r_p}} \leq e^{\kappa}B$, we have
$$
\Norm{\varphi_{\Hc}^\tau(f) - \varphi_{\Hc}^\tau(g)}{\Hc_\nu^r} \leq e^{L_*(1+B)\tau } \Norm{f - g}{\Hc_\nu^r},
$$
for some constant $L_*$ depending on $r,\nu, s_p$ and $\nu_p$.

From \eqref{ibanez}, we obtain
$$
\Norm{f_{n+1} - f(t_{n+1})}{\Hc_\nu^r} \leq \Norm{\psi_p^\tau(f_{n}) - \varphi^\tau_{\Hc}(f_{n})}{\Hc_\nu^r}
+ e^{L_*(1+B) \tau}\Norm{f_n  - f(t_n)}{\Hc_\nu^r}.
$$
Using now the definition \ref{gortoz} of the order $p$ of the method, applied with the bounded set $\mathcal{B}$ defined as the ball of radius $e^\kappa B$ in the space $\Hc^{\nu + \nu_0}_{r + r_p}$, we obtain
$$
\Norm{f_{n+1} - f(t_{n+1})}{\Hc_\nu^r} \leq C \tau^{p+1}
+ e^{L_*(1+B) \tau}\Norm{f_n  - f(t_n)}{\Hc_\nu^r},
$$
which gives the result by induction.
\end{proof}

\subsubsection{Splitting methods with iterated commutators}

To end this section, we give arguments to show that Theorem \ref{th23} still holds for more general splitting methods defined by formulas \eqref{sche.1} \and \eqref{sche.2} using flows associated with iterated commutators.

Let us first consider the one-dimensional case $ d = 1$. In this case, the flow of
$[[\Tc, \Uc],\Uc] = 2 m \, \Uc$,
as well as the flows of the high-order commutators $W_{5,1}$, $W_{7,1}$ and $W_{7,2}$ (see \eqref{modifpotb}) are in fact flows of the Hamiltonian $\Uc$ scaled in time. Hence, all the previous convergence results extend straightforwardly to numerical schemes of the form \eqref{sche.2}.

In the higher dimensional cases $d \geq 2$, the flow $\varphi_{[[\Tc,\Uc],\Uc]}^t$ is given by the formulas \eqref{dorian}, \eqref{dorian2}. We see that it has the same structure as the flow $\varphi_{\Uc}^t$, but the potential $\phi(f)$ is replaced by the potential $K(f)$ given by the formula \eqref{gugliemi}. This potential satisfies the following estimates (compare with Lemma \ref{lem.51})
\begin{lemma}
Let $\nu>d/2$. Then we have for $r \geq 2 + d/2$
\begin{equation}
\label{alouette2}
\Norm{K(f)(x)}{H_x^{r}} \leq C \Big( \Norm{f}{\Hc^{r -2}_\nu} + \Norm{f}{\Hc^{r- 2}_\nu}^2 \Big)
\end{equation}
and
\begin{equation}
\label{bach2}
\Norm{ K(f)(x)}{W^{r,\infty}_x} \leq C \Big(\Norm{f}{\Hc^{r + \nu - 2}} + \Norm{f}{\Hc^{r + \nu - 2}_\nu}^2 \Big).
\end{equation}
\end{lemma}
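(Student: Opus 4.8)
The plan is to read $K(f)$ off Proposition~\ref{propK} and reduce both inequalities to Lemma~\ref{lem.51}, in the same way \eqref{bach} was reduced to \eqref{alouette}. The essential point is to work with the \emph{Laplacian} of $K$, equation \eqref{gugliemi}, rather than with the raw expression \eqref{eqonK_general}: writing $\phi=\phi(f)$, $E=E(f)$, $m=m(f)$ and letting $\Delta_x^{-1}$ act on functions of zero $x$-average, the $x$-average of $K(f)$ equals $\bar K:=\frac{1}{(2\pi)^d}\int_{\T^d}|E|^2\,\dd x$ (since $\mathrm{div}(\rho E)$, hence $\Delta_x^{-1}\mathrm{div}(\rho E)$, has zero average); subtracting it and inverting \eqref{gugliemi} gives
$$
K(f) \;=\; \bar K \;-\; \Delta_x^{-1}\Big(-2m\,\Delta_x\phi \;-\; 2\sum_{i,j=1}^d(\partial_{x_i}\partial_{x_j}\phi)^2 \;+\; 2(\Delta_x\phi)^2\Big).
$$
In this form $\Delta_x^{-1}$ supplies the same two-derivative gain already exploited for $\phi$ in \eqref{alouette}; estimating \eqref{eqonK_general} directly would cost one derivative and only yield control by $\Norm{f}{\Hc^{r-1}_\nu}$.

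For \eqref{alouette2} I would bound the three contributions separately. The constant obeys $|\bar K|\le C\Norm{E}{L^2_x}^2\le C\Norm{f}{\Hc^0_\nu}^2$ by \eqref{alouette} with $|p|=1$. Elliptic regularity on the torus gives $\Norm{\Delta_x^{-1}g}{H^r_x}\le C\Norm{g}{H^{r-2}_x}$ for zero-average $g$, so it remains to bound each term of the bracket in $H^{r-2}_x$. For $m\,\Delta_x\phi$: use $\Norm{m\,\Delta_x\phi}{H^{r-2}_x}\le |m|\,\Norm{\phi}{H^r_x}$, with $|m|\le C\Norm{f}{\Hc^0_\nu}$ (Cauchy--Schwarz in $v$, as in the proof of Lemma~\ref{lem.51}) and $\Norm{\phi}{H^r_x}\le C\Norm{f}{\Hc^{r-2}_\nu}$ (sum \eqref{alouette} over $|p|\le r$, using $r>2$). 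For the quadratic terms $(\partial_{x_i}\partial_{x_j}\phi)^2$ and $(\Delta_x\phi)^2$: apply the Sobolev product estimate $\Norm{uv}{H^{r-2}_x}\le C(\Norm{u}{L^\infty_x}\Norm{v}{H^{r-2}_x}+\Norm{u}{H^{r-2}_x}\Norm{v}{L^\infty_x})$, bounding one factor by $\Norm{\phi}{W^{2,\infty}_x}\le C\Norm{f}{\Hc^{\nu}_\nu}$ via \eqref{bach} and the other by $\Norm{\phi}{H^r_x}\le C\Norm{f}{\Hc^{r-2}_\nu}$ via \eqref{alouette}. Collecting the three bounds yields $\Norm{K(f)}{H^r_x}\le C(\Norm{f}{\Hc^{r-2}_\nu}+\Norm{f}{\Hc^{r-2}_\nu}^2)$, provided $r$ is large enough for $\Hc^\nu_\nu\subseteq\Hc^{r-2}_\nu$, i.e. $r\ge\nu+2$.

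For \eqref{bach2} I would argue exactly as in the passage from \eqref{alouette} to \eqref{bach}: the Gagliardo-Nirenberg inequality in $\T^d$ gives $\Norm{\partial_x^pK}{L^\infty_x}\le C\Norm{K}{H^{|p|+\nu}_x}$ for $\nu>d/2$, hence $\Norm{K(f)}{W^{r,\infty}_x}\le C\Norm{K(f)}{H^{r+\nu}_x}$, and applying \eqref{alouette2} with $r$ replaced by $r+\nu$ gives the claimed bound in terms of $\Norm{f}{\Hc^{r+\nu-2}_\nu}$ and its square.

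The only genuinely delicate step is the accounting of Sobolev exponents in the two quadratic terms: one must not spend more than $r-2$ (resp.\ $r+\nu-2$) derivatives on $f$, which is precisely why the Laplacian form \eqref{gugliemi} must be used and why the natural requirement is $r\ge\nu+2$ (slightly stronger than, but in the spirit of, the assumption $r\ge2+d/2$ in the statement; in all later applications one has $r\ge3\nu$, so this is immaterial). Everything else is the same bookkeeping with weighted Sobolev norms already carried out in Lemmas~\ref{lem.51} and \ref{lem:1}.
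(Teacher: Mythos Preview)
Your proposal is correct and follows essentially the same route as the paper: invert \eqref{gugliemi} to gain two derivatives via $\Delta_x^{-1}$, then control the linear term $2m\phi$ and the quadratic terms $(\partial_{x_i}\partial_{x_j}\phi)^2$, $(\Delta_x\phi)^2$ through Lemma~\ref{lem.51}. The only noteworthy difference is in the product step for \eqref{alouette2}: the paper invokes the Sobolev algebra property $\Norm{uv}{H^{r-2}_x}\le C\Norm{u}{H^{r-2}_x}\Norm{v}{H^{r-2}_x}$ (valid for $r-2>d/2$), which yields $\Norm{(\partial_{x_i}\partial_{x_j}\phi)^2}{H^{r-2}_x}\le C\Norm{\phi}{H^r_x}^2\le C\Norm{f}{\Hc^{r-2}_\nu}^2$ directly and explains the stated hypothesis $r\ge 2+d/2$, whereas your Kato--Ponce variant produces the slightly stronger requirement $r\ge\nu+2$ that you flagged. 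The paper also proves \eqref{bach2} directly from the $W^{r-2,\infty}_x$ algebra structure rather than by embedding $H^{r+\nu}_x\hookrightarrow W^{r,\infty}_x$, but your route is equally valid.
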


\begin{proof}
Using the definition \eqref{eqonK_lap} of $K$, we have
$$
K = 2m  \phi - 2\sum_{i, j = 1}^d  (-\Delta_x)^{-1}\left( \partial_{x_i}\partial_{x_j}\phi  \right)^2 + 2(-\Delta_x)^{-1}(\Delta_x \phi)^2.
$$
We deduce that for $r \geq 2$,
\begin{equation*}
\begin{aligned}
\Norm{&K}{W^{r,\infty}_x}  \\
&\leq C \Big( \Norm{\phi}{W^{r,\infty}_x}Ê +  \sup_{i,j = 1, \dots, d} \Norm{\left( \partial_{x_i}\partial_{x_j}\phi  \right)^2}{W_x^{r-2,\infty}} +  \Norm{\left( \Delta_x \phi  \right)^2}{W_x^{r-2,\infty}}\Big)\\
&\leq  C \Big( \Norm{\phi}{W^{r,\infty}_x}Ê +   \sup_{i,j = 1, \dots, d} \Norm{ \partial_{x_i}\partial_{x_j}\phi }{W_x^{r-2,\infty}}^2+  \Norm{\Delta_x \phi  }{W_x^{r-2,\infty}}^2\Big),
\end{aligned}
\end{equation*}
and we deduce \eqref{alouette2} from the estimate \eqref{bach} for $\phi(f)$.
For the $L^2$ estimates, we have
$$
\Norm{K}{H^r_x}
\leq  C \Big( \Norm{\phi}{H^{r}_x}Ê +   \sup_{i,j = 1, \dots, d} \Norm{ ( \partial_{x_i}\partial_{x_j} \phi)^2 }{H_x^{r-2}}+  \Norm{(\Delta_x \phi)^2  }{H_x^{r-2}}\Big),
$$
and we conclude in a similar way by using the fact that for $\alpha > d/2$,
$$
\forall\,u,v \in H_x^\alpha, \quad
\Norm{u v }{H_x^{\alpha}} \leq C \Norm{u }{ H_x^{\alpha}} \Norm{ v }{ H_x^{\alpha} }
$$
for some constant $C$ depending only on $\alpha$ and $d$. We then deduce the result by applying the estimate \eqref{alouette} on $\phi(f)$.
\end{proof}
With these estimates in hand, it is easy to show that an existence result like Theorem
\ref{th63appendix} Êholds for the flow $\varphi_{[[\Tc,\Uc],\Uc]}^t$ with similar estimates.
Convergence results for splitting methods
\eqref{sche.1} can then be easily shown as in the proof of Theorem \ref{th23}.

\section{Conclusion}
In this work, new time splitting schemes are proposed for the Vlasov--Poisson system.
They are based on the decomposition of the Hamiltonian ${\Hc}$ between the kinetic ${\Tc}$ and
electric ${\Uc}$ part. In the one-dimensional case, the relation $[[{\mathcal T}, {\mathcal U}], {\mathcal U}] = 2m\, {\mathcal U}$
enables to design very efficient (with optimized number of flows)
high-order splitting using the modified potential approach. This can be generalized
to arbitrary dimension, the price to pay being to compute
the flow associated to the commutator $[[{\mathcal T}, {\mathcal U}], {\mathcal U}]$
which only depends on the spatial
variables; in this case also, new high-order splitting are proposed which turns out to be
very efficient compared to the existing splitting of the literature.
Finally, a convergence result of such splitting methods applied to the Vlasov--Poisson system
is obtained.



\end{document}